\documentclass[12pt]{amsart}
\usepackage{epsfig,color}
\usepackage{blindtext}
\usepackage{hyperref}
\usepackage{graphicx}
\usepackage{enumitem} 
\usepackage{url}
\usepackage{amssymb}
\usepackage{graphicx,import}

\headheight=6.15pt \textheight=8in \textwidth=6.5in
\oddsidemargin=0in \evensidemargin=0in \topmargin=0in

\setcounter{section}{0}
\theoremstyle{definition}

\newtheorem{theorem}{Theorem}[section]
\newtheorem{definition}[theorem]{Definition}

\newtheorem{proposition}[theorem]{Proposition}
\newtheorem{lemma}[theorem]{Lemma}
\newtheorem{remark}[theorem]{Remark}
\newtheorem{corollary}[theorem]{Corollary}

\newtheorem*{remark*}{Remark}

\numberwithin{equation}{section}

\newcommand{\dv}{\text{div}}

\newcommand{\mc}{\mathcal}
\newcommand{\mb}{\mathbb}

\newcommand{\eps}{\varepsilon}

\DeclareMathOperator{\area}{Area}

\title[Compactness and Rigidity of $\lambda$-Surfaces]{Compactness and Rigidity of $\lambda$-Surfaces}

\date{\today}

\author{Ao Sun}
\address{Department of Mathematics, Massachusetts Institute of Technology, Cambridge, MA 02139, USA}
\email{aosun@mit.edu}

\begin{document}

\begin{abstract}
In this paper we develop the compactness theorem for $\lambda$-surface in $\mb R^3$ with uniform $\lambda$, genus, and area growth. This theorem can be viewed as a generalization of Colding-Minicozzi's compactness theorem for self-shrinkers in $\mb R^3$. As an application of this compactness theorem, we prove a rigidity theorem for convex $\lambda$-surfaces.
\end{abstract}

\maketitle

\section{Introduction}
A surface $\Sigma\subset\mb R^3$ is said to be a {\bf $\lambda$-surface} if it is a critical point for Gaussian area enclosing fixed Gaussian volume. Recall the Gaussian area and Gaussian volume are area and volume in $\mathbb{R}^3$ with Gaussian measure density $e^{-\frac{\vert x\vert^2}{4}}$. $\lambda$-surfaces naturally arise in geometry and probability theory. When $\lambda=0$, $\lambda$-surfaces are also called self-shrinkers, which are the models of the singular points of mean curvature flow.

A $\lambda$-surface satisfies the equation
\[H=\frac{\langle x,\mathbf n\rangle}{2}+\lambda\]
where $H=\dv~\mathbf{ n}$ is the mean curvature of $\Sigma$, $x$ is the position vector in $\mb R^3$, and $\mathbf{n}$ is the unit normal vector of $\Sigma$.

In this paper, we prove a compactness theorem for $\lambda$-surfaces.
\begin{theorem}\label{mainthm}
Given an integer $g\geq 0$ and $\Lambda\geq0$. Suppose $\Sigma_i\subset\mb R^3$ is a sequence of smooth complete embedded $\lambda_i$-surfaces with genus at most $g$ and $\partial\Sigma_i=\emptyset$, satisfying:
\begin{enumerate}
\item $\area(B_{R}(x_0)\cap\Sigma_i)\leq C(x_0)R^2$ for all $x_0\in\mb R^3$ with a constant $C$ depending on $x_0$ and all $R>0$,
\item $\vert \lambda_i\vert\leq\Lambda$.
\end{enumerate}
Then 

\begin{enumerate}
\item either: there exists a complete smooth self-touching immersed $\lambda$-surface $\Sigma$ with local finitely many neck pinching points, such that a subsequence of $\Sigma_i$ converges to $\Sigma$ in $C^k$ topology in any compact subset of $\mb R^3$ which does not contain those neck pinching points, for any $k\geq 2$.
\item or: there exists a complete smooth self-shrinker $\Sigma$ and a subsequence of $\Sigma_i$ converges to $\Sigma$ with multiplicity $2$.
\end{enumerate}
\end{theorem}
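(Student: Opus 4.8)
The plan is to adapt Colding--Minicozzi's argument for self-shrinkers; the genuinely new features are the $\lambda$-dependence of the geometry and the self-touching degenerations permitted by the volume constraint.

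\emph{Step 1: local estimates and rough compactness.} On any ball $B_R(x_0)$ the equation gives $|H|=|\tfrac12\langle x,\mathbf n\rangle+\lambda_i|\le \tfrac R2+\Lambda$, so $H$ is locally uniformly bounded on $\Sigma_i$. The Gauss equation $|A|^2=H^2-2K$, together with Gauss--Bonnet applied on $B_r(x_0)\cap\Sigma_i$ for a good radius $r\le R$ (chosen via the coarea formula and Sard's theorem so that $\Sigma_i\cap\partial B_r$ is a regular curve of controlled length and geodesic curvature, as in Colding--Minicozzi), the genus bound, and hypothesis (1), combine after an absorption argument into a local bound $\int_{B_r(x_0)\cap\Sigma_i}|A|^2\le C(x_0,R,g,\Lambda)$. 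Thus the $\Sigma_i$ have locally uniformly bounded area, genus and total curvature. By the Choi--Schoen $\varepsilon$-regularity theorem and the compactness machinery of Colding--Minicozzi's smooth-compactness paper, after passing to a subsequence there are: a locally finite set $\mathcal S\subset\mathbb R^3$, a limit $\lambda_\infty=\lim\lambda_i$ (using $|\lambda_i|\le\Lambda$), and a smooth surface $\Sigma\subset\mathbb R^3\setminus\mathcal S$ each sheet of which is a $\lambda_\infty$-surface (the equation passes to the $C^2$ limit), with $\Sigma_i\to\Sigma$ in $C^\infty_{loc}(\mathbb R^3\setminus\mathcal S)$ with locally constant integer multiplicity $m\ge 1$; at each point of $\mathcal S$ at least a fixed amount $\varepsilon_0$ of $\int|A|^2$ concentrates, which makes $\mathcal S$ locally finite.

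\emph{Step 2: structure at the bad points.} Blow up $\Sigma_i$ at a point of $\mathcal S$ with scales $r_i\to 0$ normalizing $\sup|A|$ to $1$ nearby. Since mean curvature scales as $\tilde H=\tfrac{r_i^2}{2}\langle\tilde x,\mathbf n\rangle+r_i\lambda_i\to 0$ on compacta, the blow-up limit is a complete embedded \emph{minimal} surface in $\mathbb R^3$ of genus $\le g$ and finite total curvature, nonflat by the normalization; by the classification of such surfaces it is the catenoid or a finite-genus surface with planar/catenoidal ends, whose ``neck'' region, scaled back down, is a thin bridge of $\Sigma_i$ collapsing to the given point as $i\to\infty$. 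If the two ends of such a bridge attach to the same sheet of $\Sigma$ we obtain a neck pinching point; if they attach to two different sheets, those sheets become tangent there and $\Sigma$ is self-touching. When $m=1$ this is precisely conclusion (1): $\Sigma$ extends to a complete smooth self-touching immersed $\lambda_\infty$-surface with locally finitely many neck pinching points, and $\Sigma_i\to\Sigma$ in $C^k$ on every compact set avoiding $\mathcal S$, for all $k\ge 2$.

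\emph{Step 3: the multiplicity case.} Suppose $m\ge 2$. Near a good point write $\Sigma_i$ as ordered graphs $u_i^1<\dots<u_i^m$ over $\Sigma$, all tending to $0$. Since consecutive sheets are joined by collapsing bridges, they carry \emph{opposite} co-orientations relative to $\Sigma$ (the outward normal along a bridge points toward one sheet and away from the next). Hence, written as graphs over $\Sigma$ with $\Sigma$'s fixed normal, $\mathrm{graph}(u_i^j)$ solves $H_f=\lambda_i$ while $\mathrm{graph}(u_i^{j+1})$ solves $H_f=-\lambda_i$; letting $i\to\infty$, the single surface $\Sigma$ is simultaneously a $\lambda_\infty$-surface and a $(-\lambda_\infty)$-surface, so $\lambda_\infty=0$ and $\Sigma$ is a self-shrinker. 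Equivalently, $w_i:=u_i^{j+1}-u_i^j>0$ satisfies $\mathcal Lw_i=-2\lambda_i+o(1)$ with $\mathcal L=\Delta_f+|A|^2+\tfrac12$ and $f=\tfrac{|x|^2}{4}$; since $w_i\to 0$ in $C^2_{loc}$ this forces $\lambda_i\to 0$, and the Harnack-normalized limit of $w_i$ is a positive Jacobi field on $\Sigma$. With $\lambda_\infty=0$, Colding--Minicozzi's smooth compactness theorem for self-shrinkers applies and the bridging structure (a generic collapsing neck meeting exactly two sheets) pins the multiplicity to $2$, which is conclusion (2).

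\emph{Main obstacle.} The technical heart is Step 2: showing that the concentration set is produced exactly by collapsing minimal necks, that the blow-up limits are the asserted minimal surfaces and not some wilder degeneration, that the genus and total-curvature budget keep $\mathcal S$ locally finite, and that the extended limit is genuinely a self-touching immersed $\lambda$-surface with isolated neck pinching points. Making the Step 1 curvature estimate uniform (closing up the Gauss--Bonnet inequality by absorption) and transplanting the Colding--Minicozzi compactness carefully to the $\lambda_i\ne 0$ setting are the remaining points requiring care.
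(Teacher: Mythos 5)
Your outline follows the same broad strategy as the paper (Choi--Schoen type $\varepsilon$-regularity, removable singularities, then the Colding--Minicozzi multiplicity argument), but Step 3 contains a genuine error and Steps 1--2 lean on minimal-surface tools that do not transfer for free. In Step 3 you assert that in the opposite-orientation case the Harnack-normalized limit of $w_i=u_i^{j+1}-u_i^j$ is a positive Jacobi field on $\Sigma$. This cannot be right: after normalizing $u_i=w_i/w_i(y)$ the equation becomes $Lu_i=-2\lambda_i/w_i(y)+\dots$, and the ratio $\lambda_i/w_i(y)$ is completely uncontrolled (both quantities tend to $0$), so the limit solves an inhomogeneous equation, not $Lu=0$. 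Indeed, if your claim were true it would rule out multiplicity $2$ altogether, contradicting conclusion (2) of the theorem. The correct mechanism, which you are missing, is a pigeonhole on co-orientations: a pair of sheets with the \emph{same} orientation has difference satisfying $Lw=0$ up to controlled higher-order terms (the $\lambda_i$'s cancel), and its normalized limit is a positive solution of $Lu=0$ on the limit surface, which is impossible. For $m\ge 3$ two sheets must share an orientation, so $m\le 2$; for $m=2$ with opposite orientations your observation that $\Sigma$ is simultaneously a $\lambda_\infty$- and a $(-\lambda_\infty)$-surface correctly forces $\lambda_\infty=0$. Note also that the crucial non-existence statement must hold on $\lambda$-surfaces with $\lambda\neq 0$, not just on self-shrinkers; this requires showing that $\langle v,\mathbf n\rangle$ is still an eigenfunction of $L$ with eigenvalue $\tfrac12$ on a $\lambda$-surface (the paper's Lemma 2.1), an ingredient you never invoke. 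Your claim that consecutive sheets are automatically joined by collapsing bridges and hence oppositely oriented is also unjustified and unnecessary.

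Two further points need substance rather than citation. First, the Choi--Schoen $\varepsilon$-regularity theorem is for minimal surfaces and does not apply verbatim: $\lambda$-surfaces are not preserved under rescaling, so one must either rework the blow-up or, as the paper does, prove a mean-value/monotonicity inequality for functions $f$ with $\mc L f\ge -ct^{-2}f$ directly on $\lambda$-surfaces (using $\mc L|x-x_0|^2=-\langle x,x-x_0\rangle-2\lambda\langle\mathbf n,x-x_0\rangle+4$ and Simons-type inequality $\mc L|A|^2\ge -C(\Lambda)(|A|^2+|A|^4)$), keeping track of the $\lambda$- and $x_0$-dependence of all constants. Second, you never actually remove the singularities of the limit: the blow-up of $\Sigma_i$ describing collapsing necks does not show that $\Sigma\cup\mc S$ is a smooth immersed surface. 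The paper establishes the White-type estimate $|A(x)||x-x_0|\le C$ on the limit near each $x_0\in\mc S$, shows every tangent cone is a plane (total curvature of the rescalings tends to $0$), and then uses a foliation by solutions of the perturbed equation together with the maximum principle to get uniqueness of the tangent plane, $C^1$ graphicality, and finally smoothness by elliptic regularity. Without these two steps the "either" alternative of the theorem is asserted rather than proved.
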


Here $\Sigma$ is assumed to be homeomorphic to a closed surface with finitely many disjoint closed disks removed. The genus is defined to be the genus of the closed surface.  Locally finite means finite inside any ball $B_R$ for any $R>0$. Self-touching means that at each non-embedded points $p$ of $\Sigma$, $\Sigma$ can be written as the union of two graphs over the tangent plane at $p$, and the function $u_1,u_2$ defining the graphs satisfies $u_1\geq u_2$. See picture.

\begin{figure}\label{figure1}
\centering
\scalebox{1}{
\begingroup%
  \makeatletter%
  \providecommand\color[2][]{%
    \errmessage{(Inkscape) Color is used for the text in Inkscape, but the package 'color.sty' is not loaded}%
    \renewcommand\color[2][]{}%
  }%
  \providecommand\transparent[1]{%
    \errmessage{(Inkscape) Transparency is used (non-zero) for the text in Inkscape, but the package 'transparent.sty' is not loaded}%
    \renewcommand\transparent[1]{}%
  }%
  \providecommand\rotatebox[2]{#2}%
  \newcommand*\fsize{\dimexpr\f@size pt\relax}%
  \newcommand*\lineheight[1]{\fontsize{\fsize}{#1\fsize}\selectfont}%
  \ifx\svgwidth\undefined%
    \setlength{\unitlength}{212.5984252bp}%
    \ifx\svgscale\undefined%
      \relax%
    \else%
      \setlength{\unitlength}{\unitlength * \real{\svgscale}}%
    \fi%
  \else%
    \setlength{\unitlength}{\svgwidth}%
  \fi%
  \global\let\svgwidth\undefined%
  \global\let\svgscale\undefined%
  \makeatother%
  \begin{picture}(1,0.69333333)%
    \lineheight{1}%
    \setlength\tabcolsep{0pt}%
    \put(0,0){\includegraphics[width=\unitlength,page=1]{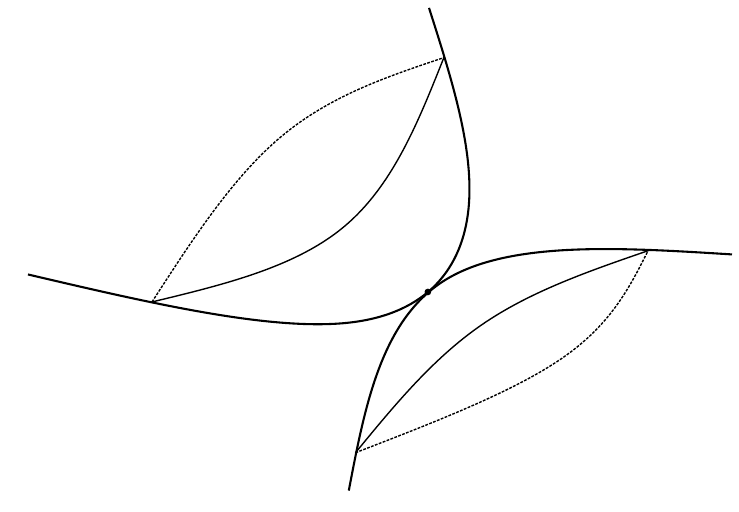}}%
    \put(0.01996682,0.13003215){\color[rgb]{0,0,0}\makebox(0,0)[lt]{\lineheight{1.25}\smash{\begin{tabular}[t]{l}Touching point\end{tabular}}}}%
  \end{picture}%
\endgroup%
}
\end{figure}    

Neck pinching points are special touching points which behaves badly in the convergence process. We will give precise definition in Section \ref{Section:Compactness}. Intuitively they are touching points generated by a neck pinching process. In particular, if the limit surface $\Sigma$ does not contain touching points, then the convergence is smooth in any compact subset in $\mb R^3$.

This compactness theorem generalizes \cite{colding2012smooth}, in which Colding-Minicozzi proved the compactness result for self-shrinkers, which is our main theorem with $\lambda=0$. \cite{colding2012smooth} played an important role in the study of self-shrinkers, especially for the rigidity of self-shrinkers. Here, we can also use the compactness theorem of $\lambda$-surfaces to study the rigidity of $\lambda$-surfaces. In particular, we have the following rigidity theorem:

\begin{theorem}\label{rigiditythm}
There exists $\delta_D<0$, such that for any $\lambda\in(\delta_D,+\infty)$, any convex $\lambda$-surface with diameter less than $D$ has to be sphere $S(r)$ with radius $r=\sqrt{\lambda^2+4}-\lambda$.
\end{theorem}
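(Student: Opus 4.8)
The plan is to argue by contradiction, feeding a suitable sequence of convex $\la$-surfaces into Theorem~\ref{mainthm}, and to reduce matters to the rigidity of convex $\la$-surfaces with $\la\ge 0$ together with an implicit function theorem around the round self-shrinker $S(2)$. First dispose of $\la\ge 0$: a compact embedded convex $\la$-surface with $\la\ge 0$ must be the sphere $S\!\left(\sqrt{\la^2+4}-\la\right)$. When $\la=0$ this is classical, since a compact convex self-shrinker has the origin in its interior (otherwise at the point $q$ of $\Si$ nearest the origin, $H(q)=\tfrac12\langle q,\mathbf n(q)\rangle=-\tfrac12\dist(0,\Si)<0$, impossible for a convex surface), hence is strictly mean convex and so round by Huisken's theorem; for $\la>0$ it follows from the Alexandrov-type theorem for $\la$-hypersurfaces — equivalently, constant weighted mean curvature in the Gauss space $(\R^3,e^{-|x|^2/4})$, where $\Ric_f=\tfrac12 g>0$ — and, for $\la$ large, alternatively by rescaling to the constant–mean–curvature regime and combining Alexandrov's theorem with the nondegeneracy of the limiting sphere. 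It therefore suffices to find $\delta_D<0$ governing a band $(\delta_D,0)$ of negative values, and this is where the compactness theorem is used. Suppose no such $\delta_D$ exists; then for each $n$ there are $\la_n>-1/n$ and a convex $\la_n$-surface $\Si_n=\partial K_n$ with $\operatorname{diam}\Si_n<D$ that is not a round sphere. By the previous paragraph $\la_n<0$, hence $\la_n\to 0^-$.

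I would then record uniform a priori bounds guaranteeing that Theorem~\ref{mainthm} produces a nondegenerate limit. Since $\la_n<0$ the nearest–point argument again forces $0\in\operatorname{int}K_n$ (and $0\notin\Si_n$, else $H=\la_n<0$), so $\Si_n\subset B_D$ and $|H|=\big|\tfrac12\langle x,\mathbf n\rangle+\la_n\big|\le D/2+1$ on $\Si_n$; convexity gives $|A|^2=\kappa_1^2+\kappa_2^2\le H^2\le(D/2+1)^2$, a uniform curvature bound. Convex surfaces have genus $0$ and satisfy $\area\!\big(B_R(x_0)\cap\Si_n\big)\le 4\pi R^2$ (the convex body $K_n\cap B_R(x_0)$ lies in $B_R(x_0)$), so the hypotheses of Theorem~\ref{mainthm} hold with $\Lambda=1$; moreover $\max_{\Si_n}H\ge 4/\operatorname{diam}\Si_n$ (Gauss--Bonnet and $\area\Si_n\le\pi(\operatorname{diam}\Si_n)^2$), so $\operatorname{diam}\Si_n\ge 4/(D/2+1)>0$. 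Now apply Theorem~\ref{mainthm}: a subsequence converges to a limit of type (1) or (2). Convexity kills both degeneracies — a multiplicity-two limit, or a neck-pinching (self-touching) point, would make $K_n$ a thin shell or a pinching dumbbell near the relevant scale; and the curvature and diameter bounds prevent collapse — so (also by Blaschke selection on $K_n\subset B_D$) the convergence is in $C^k$, $\Si_n\to\Si_\infty=\partial K_\infty$ with $\operatorname{int}K_\infty\ne\emptyset$, a smooth closed embedded convex surface. Since $\la_n\to 0$, $\Si_\infty$ is a compact embedded convex self-shrinker, hence $\Si_\infty=S(2)$.

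To close the contradiction I would use an implicit function theorem at $S(2)$. The linearization of the defect operator $\Si\mapsto H-\tfrac12\langle x,\mathbf n\rangle-\la$ at the origin-centred sphere $S(r)$ (the drift term $\langle x^{\top},\nabla\,\cdot\,\rangle$ vanishing there) is $-\!\left(\Delta_{S(r)}+\tfrac{2}{r^2}+\tfrac12\right)$; at $r=2$ this is $-(\Delta_{S(2)}+1)$, with eigenvalues $1-\tfrac14\ell(\ell+1)$ $(\ell\ge 0)$, none of which vanish, so $S(2)$ is a nondegenerate $\la$-surface. Writing surfaces near $S(2)$ as normal graphs $w$ over $S(2)$, the map $(w,\la)\mapsto\text{defect}$ then satisfies the hypotheses of the implicit function theorem at $(0,0)$; hence there are $\eps>0$ and a $C^{2,\alpha}$-neighbourhood of $S(2)$ in which, for $|\la|<\eps$, the only $\la$-surface is the constant graph $w\equiv\sqrt{\la^2+4}-\la-2$, i.e.\ the round sphere $S\!\left(\sqrt{\la^2+4}-\la\right)$. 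Since $\Si_n\to S(2)$ in $C^{2,\alpha}$ and $\la_n\to 0$, for large $n$ the surface $\Si_n$ lies in this neighbourhood, so $\Si_n$ is round — contrary to the choice of $\Si_n$. This produces the required $\delta_D<0$.

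The step I expect to be the main obstacle is the control of the compactness limit: showing that convexity together with the $\la$-surface equation excludes every degenerate alternative of Theorem~\ref{mainthm} (multiplicity two, neck pinching, collapse), so that $\Si_\infty$ is a genuine smooth embedded convex surface and Huisken's classification and the implicit function theorem can be invoked. The final implicit-function step and the $\la\ge 0$ input are comparatively routine, although one should check that the cited $\la\ge 0$ rigidity is available precisely in the embedded, convex generality used here.
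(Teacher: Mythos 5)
Your proposal is correct and follows essentially the same route as the paper: reduce to $\lambda<0$ via the known $\lambda\ge 0$ rigidity, take a contradicting sequence with $\lambda_i\to 0^-$, use convexity to supply the genus and $4\pi R^2$ area-growth hypotheses and to rule out multiplicity two and touching in Theorem \ref{mainthm}, identify the limit as $S(2)$ by Colding--Minicozzi/Huisken, and conclude by the implicit function theorem using that $\Delta_{S^2_2}+1$ is invertible so the round spheres $S(\sqrt{\lambda^2+4}-\lambda)$ are the unique nearby solutions. The extra details you supply (the nearest-point argument, the curvature bound $|A|^2\le H^2$, the Gauss--Bonnet non-collapsing estimate) are consistent refinements of the paper's argument rather than a different method.
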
 

This theorem was previously only known for $\lambda\geq0$ case. In \cite{colding2012generic}, Colding and Minicozzi showed that mean convex self-shrinkers in $\mb R^{n+1}$ for all $n\geq2$ can only be spheres and generalized cylinders. Later in \cite{heilman2017symmetric} Heilman generalized this kind of rigidity to $\lambda$-hyersurfaces in $\mb R^{n+1}$ for $\lambda>0$. 

The case $\lambda<0$ is much more complicated. In \cite{chang20171} Chang constructed $ 1$ dimensional $\lambda$-curves in $\mb R^2$ which are not circles. These surprising examples show that $\lambda$-surfaces behave very differently when $\lambda<0$ compare to $\lambda>0$. New techniques must be introduced to study this phenomenon. We imagine these examples may carry meaningful information in probability theory.

\subsection{$\lambda$-hypersurfaces}
From a variational point of view, a $\lambda$-hypersurface is the critical point of the Gaussian area

\begin{equation}
\mathcal{F}(\Sigma)=\int_\Sigma e^{-\frac{\vert x\vert^2}{4}}d\mu
\end{equation}

with variational field $V=f\mathbf{n}$ preserving the volume of the domain enclosed by $\Sigma$, i.e.

\[\int_\Sigma f e^{-\frac{\vert x\vert^2}{4}}d\mu=0.\]

Self-shrinkers are hypersurfaces in $\mathbb{R}^{n+1}$ satisfying the $\lambda$-hypersurface equation with $\lambda=0$. Self-shrinkers are the models for singularities of mean curvature flows, and have been studied in many papers. See \cite{huisken1990asymptotic}, \cite{colding2012generic}. 

In probability theory, $\lambda$-hypersurfaces are related to the isoperimetric problem in Gaussian space. This space plays a central role in probability, and the isoperimetric problem in Gaussian space is of long time interest in probability, see for example \cite{isaksson2012maximally}, \cite{mossel2015robust}. If the boundary $\Sigma$ of the best solution to isoperimetric problem is smooth, then it must be a minimizer of the $\mathcal{F}$ functional under variations preserving the volume enclosed by $\Sigma$, i.e. a $\lambda$-hypersurface. So one approach to the isoperimetric problem in Gaussian space is to study $\lambda$-hypersurfaces. 

In 70's, Sudakov and Cirel'son \cite{sudakov1978extremal} showed that the most optimal solutions to the isoperimetric problem in Gaussian space are hyperplanes. Later McGonagle and Ross \cite{mcgonagle2015hyperplane} also proved this result by techniques in geometric analysis. In 2001, Barthe \cite{barthe2001isoperimetric} studied the Gaussian isoperimetric problem when the set is required to be symmetric. He suggested that symmetric two planes could be the most optimal case in symmetric setting. Some other research also suggested that the most optimal case could be a sphere, for details see the introduction of \cite{heilman2017symmetric}. We hope our study could play a role in this probability problem.

\subsection{Compactness Theorem}
The compactness theorem for minimal surfaces in $3$ dimensional manifolds was first developed by Choi and Schoen in \cite{choi1985space}. Later their result were generalized to many different situations. For examples, White generalized the compactness theorem to stationary surfaces of parametric elliptic functional in \cite{white1987curvature}, Colding and Minicozzi generalized the compactness theorem to self-shrinkers in \cite{colding2012smooth}, and later Ding and Xin weakened the requirements in \cite{ding2013volume}. There are also other generalizations, see \cite{cheng2015stability}, \cite{li2015f}. 

The compactness theorem relies on two main ingredients. The first ingredient is a curvature estimate. In \cite{choi1985space} Choi and Schoen developed a blow up technique to get a point-wise curvature bound for minimal surfaces with small total curvature. Here we will use the same technique, but develop the necessary tools for $\lambda$-surfaces. 

The second ingredient is the smoothness of the convergence. We will follow the idea of Colding and Minicozzi in \cite{colding2012smooth}. Their idea is if the sequence of self-shrinkers do not convergence smoothly to a limit self-shrinker, then we can construct a positive Jacobi field of the second variational operator of self-shrinkers, which is impossible by a result in \cite{colding2012generic}. In this paper, we observe that although $\lambda$-surfaces do not satisfy the same equations for different $\lambda$'s, the major linear terms of the difference between $\lambda$-surfaces are the same. Thus we can still get some uniform estimate just like the self-shrinker case.

If the limit is a self-shrinker, i.e. $\lambda=0$, then the multiplicity $2$ convergence is possible. Suppose $\Sigma_i$ are $\lambda_i$-surfaces converge to $\Sigma$ which is a self-shrinker, then it is possible that $\Sigma_i$ locally is double sheeting graphs over $\Sigma$, with different orientation. Then the linearized operator is not the same as the second variational operator over $\Sigma$. However, if the number of sheets is larger than $2$, then we can find two sheets have the same orientation, and considering the sheets with this orientation we can get a positive Jacobi field as well. Thus the convergence if at most of multiplicity $2$.

Note for general $\lambda\neq 0$, $\lambda$-surfaces do not have a maximum principle, so touching may happen in the limit surface. There are two limiting models of touching points. One is called {\bf kissing}, and we can image two disks are far from each other at first and then try to kiss each other. Finally they really kiss each other to form a touching; Another one is called {\bf neck pinching}, and we can image a very thin neck is pinching and finally collapse to form a touching. It turns out that the smooth convergence can actually passing through those kissing touching points, but can never be smooth at neck pinching points, because the topology changes during this limiting process.

We will discuss more details in a paper in preparation.
\subsection{Organization of the Paper}

In section 2 we will discuss the properties of the second variational operator $L$ for $\lambda$-surfaces. We will prove $L$ is the linearized operator of $\lambda$-surfaces, hence the solution to this operator carries the information of the nearby $\lambda$-surfaces.

In section 3 we prove the main compactness theorem. We first follow \cite{choi1985space} to get curvature estimate, so that we can find convergence subsequence; then we follow the idea of \cite{white1987curvature} to prove the limit is smoothly immersed; finally we follow the idea of \cite{colding2012smooth} to show the convergence is smooth.

In section 4 we will prove the rigidity theorem. The main idea is to use the compactness theorem to analyze the linearized operators.

\subsection{Acknowledgement}
The author want to thank Professor Bill Minicozzi for his helpful advises and heuristic conversations. We also want to thank Jonathan Zhu for pointing our the possibility of multiplicity $2$ convergence.

\section{Linearized Operator}
Throughout this section, we will assume the $\lambda$-surface $\Sigma\subset\mb R^3$ is smooth immersed with polynomial area growth.
\subsection{Property of Linearized Operator}
We define the operator
\begin{equation}
L:=\Delta_\Sigma-\frac{1}{2}\langle \nabla_{\Sigma}\cdot,x\rangle+\vert A\vert^2+\frac{1}{2}
\end{equation}
where $A$ is the second fundamental form of the surface.

If $\Sigma$ is a self-shrinker, $L$ is the second variational operator with respect to the Gaussian weight of the surface. See \cite{colding2012generic} for more details. One important property of $L$ is that for any vector $v\in\mb R^3$, $\langle v,\mathbf{n}\rangle$ is an eigenfunction of $L$ with eigenvalue $1/2$. This is obtained in \cite{mcgonagle2015hyperplane}, \cite{guang2014gap}.

\begin{lemma}\label{qianglem}
If $\Sigma\in\mb R^3$ is a $\lambda$-surface, then for any constant vector $v\in\mb R^3$, we have
\[L\langle v,\mathbf{n}\rangle=\frac{1}{2}\langle v,\mathbf{n}\rangle.\]
\end{lemma}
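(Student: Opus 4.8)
The plan is to prove the identity by a direct pointwise computation, reducing it to two classical identities on a hypersurface in $\mathbb{R}^3$ together with the $\lambda$-surface equation. First I would fix $p\in\Sigma$, choose an orthonormal frame $e_1,e_2$ for $T_p\Sigma$, and write $A_{ij}=\langle\nabla_{e_i}\mathbf{n},e_j\rangle$, so that $H=A_{11}+A_{22}=\dv\,\mathbf{n}$. Since $v$ is a fixed vector, $\nabla_{e_i}\langle v,\mathbf{n}\rangle=\langle v,\nabla_{e_i}\mathbf{n}\rangle=\sum_j A_{ij}\langle v,e_j\rangle$; in other words $\nabla_\Sigma\langle v,\mathbf{n}\rangle=A(v^\top)$, where $v^\top$ denotes the tangential part of $v$. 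By symmetry of $A$ this yields the exchange identity $\langle\nabla_\Sigma\langle v,\mathbf{n}\rangle,x\rangle=\langle A(v^\top),x^\top\rangle=\langle A(x^\top),v^\top\rangle=\langle v,\nabla_\Sigma\langle x,\mathbf{n}\rangle\rangle$, which will be used at the end.

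Next I would establish the Simons-type identity $\Delta_\Sigma\mathbf{n}=\nabla_\Sigma H-\vert A\vert^2\mathbf{n}$; this comes from differentiating $\nabla_{e_i}\mathbf{n}=\sum_j A_{ij}e_j$ a second time (in a frame geodesic at $p$) and invoking the Codazzi equation $\nabla_i A_{jk}=\nabla_j A_{ik}$, which holds because the ambient space is flat. Pairing with the constant vector $v$ gives $\Delta_\Sigma\langle v,\mathbf{n}\rangle=\langle v,\nabla_\Sigma H\rangle-\vert A\vert^2\langle v,\mathbf{n}\rangle$. Substituting into the definition of $L$, the $\vert A\vert^2\langle v,\mathbf{n}\rangle$ terms cancel, leaving
\[
L\langle v,\mathbf{n}\rangle=\langle v,\nabla_\Sigma H\rangle-\tfrac12\langle\nabla_\Sigma\langle v,\mathbf{n}\rangle,x\rangle+\tfrac12\langle v,\mathbf{n}\rangle .
\]

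To finish I would invoke the $\lambda$-surface equation $H=\tfrac12\langle x,\mathbf{n}\rangle+\lambda$. Taking the tangential gradient and using that $\lambda$ is constant gives $\nabla_\Sigma H=\tfrac12\nabla_\Sigma\langle x,\mathbf{n}\rangle$, so $\langle v,\nabla_\Sigma H\rangle=\tfrac12\langle v,\nabla_\Sigma\langle x,\mathbf{n}\rangle\rangle=\tfrac12\langle\nabla_\Sigma\langle v,\mathbf{n}\rangle,x\rangle$ by the exchange identity above. Plugging this in, the first two terms cancel and $L\langle v,\mathbf{n}\rangle=\tfrac12\langle v,\mathbf{n}\rangle$, as claimed.

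I do not expect a real obstacle here: this is essentially the computation Colding--Minicozzi carry out for self-shrinkers, and the only new point is that the extra constant $\lambda$ cannot contribute, since it is annihilated by the tangential gradient. The only care required is with the sign conventions for $A$, $H$ and the Simons identity, and with tracking tangential projections. It is worth remarking afterwards, as the introduction anticipates, that this lemma is the concrete reflection of the fact that the linearization of the $\lambda$-surface operator has the same leading-order structure for every value of $\lambda$, which is exactly what makes the uniform estimates in the compactness theorem possible.
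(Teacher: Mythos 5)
Your proposal is correct and follows essentially the same route as the paper: compute $\nabla_\Sigma\langle v,\mathbf{n}\rangle$ in terms of $A$, use Codazzi to get $\Delta_\Sigma\langle v,\mathbf{n}\rangle=\langle v,\nabla_\Sigma H\rangle-\vert A\vert^2\langle v,\mathbf{n}\rangle$, and then differentiate the $\lambda$-surface equation, where the constant $\lambda$ drops out and the symmetry of $A$ makes the gradient terms cancel. No gaps.
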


\begin{proof}
Fix a constant vector $v$, and set $f=\langle v,\mathbf{n}\rangle$. Let $e_1,e_2$ be an orthonormal frame for $\Sigma$ at $p$, then
\[\nabla_{e_i}f=\langle v,\nabla_{e_i}\mathbf{n}\rangle=-a_{ij}\langle v,e_j\rangle.\]
here $a_{ij}$ is the second fundamental form under the local frame. Differentiating again and using the Codazzi equation gives at $p$ that
\[\nabla_{e_k}\nabla_{e_i}f=-a_{ik,j}\langle v,e_j\rangle-a_{ij}\langle v,a_{jk\mathbf{n}}\rangle.\]

Taking the trace gives
\begin{equation}
\Delta_\Sigma f=\langle v,\nabla H\rangle-\vert A\vert^2f.
\end{equation}

By the $\lambda$-surface equation $H=\frac{\langle x,\mathbf{n}\rangle}{2}+\lambda$, we get
\begin{equation}
\nabla_{e_i}H=\langle e_i,\mathbf{n}\rangle+\langle x,\nabla_{e_i}\mathbf{n}\rangle=-a_{ij}\langle x,e_j\rangle.
\end{equation}
Combining these two identities gives the lemma.

\end{proof}

With the help of this lemma, we can follow \cite{colding2012generic} and \cite{colding2012smooth} to prove the following non-existence theorem

\begin{theorem}\label{nopositivesol}
For any $\lambda$-surface $\Sigma$, there is no positive function $u$ on $\Sigma$ with $Lu=0$.
\end{theorem}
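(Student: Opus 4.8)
The plan is to argue by contradiction following the strategy of Colding--Minicozzi for self-shrinkers, but replacing the role of the eigenfunction $\langle v,\mathbf n\rangle$ with the content of Lemma~\ref{qianglem}. Suppose there were a positive function $u$ on $\Sigma$ with $Lu=0$. The first step is to establish a spectral obstruction: on a positive solution, the drift Laplacian $\mathcal L_0 := \Delta_\Sigma - \tfrac12\langle \nabla_\Sigma\cdot, x\rangle$ can be conjugated to a Schr\"odinger-type operator with respect to the Gaussian weight $e^{-|x|^2/4}$, and the existence of a positive solution of $Lu=0$ forces the bottom of the (weighted) spectrum of $-L$ to be nonnegative. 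Concretely, writing $w = \log u$, the equation $Lu=0$ becomes $\mathcal L_0 w + |\nabla_\Sigma w|^2 + |A|^2 + \tfrac12 = 0$, which exhibits $-(|A|^2+\tfrac12)$ as a ``logarithmic derivative'' of a positive function; standard arguments (e.g. multiplying by $\varphi^2 e^{-|x|^2/4}$ for compactly supported $\varphi$ and integrating by parts) then show that for every compactly supported $\varphi$,
\[
\int_\Sigma \Big(|\nabla_\Sigma\varphi|^2 - \big(|A|^2+\tfrac12\big)\varphi^2\Big)\,e^{-|x|^2/4}\,d\mu \;\geq\; 0.
\]
That is, $L$ is nonpositive as a quadratic form in the Gaussian-weighted $L^2$ sense.

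The second step is to contradict this using Lemma~\ref{qianglem}. Pick any nonzero constant vector $v$ and set $f = \langle v,\mathbf n\rangle$, so $Lf = \tfrac12 f$. The difficulty is that $f$ is in general not compactly supported and may change sign, so we cannot plug it directly into the inequality above. The remedy is a cutoff argument: apply the stability-type inequality to $\varphi = \eta f$ with $\eta$ a logarithmic cutoff supported in $B_{2\rho}$ and equal to $1$ on $B_\rho$, expand, and use $Lf=\tfrac12 f$ together with the polynomial area growth hypothesis to absorb the gradient-of-cutoff error terms as $\rho\to\infty$. This yields
\[
0 \;\geq\; \int_\Sigma \Big(|\nabla_\Sigma(\eta f)|^2 - \big(|A|^2+\tfrac12\big)\eta^2 f^2\Big)e^{-|x|^2/4}\,d\mu \;\longrightarrow\; -\tfrac12\int_\Sigma f^2\, e^{-|x|^2/4}\,d\mu \;<\;0
\]
unless $f\equiv 0$. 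If $\langle v,\mathbf n\rangle\equiv 0$ for every $v$, then $\mathbf n$ is identically zero, which is absurd; hence we reach a contradiction. (The finiteness of the weighted integrals and the validity of the integration by parts on the noncompact $\Sigma$ are guaranteed by the polynomial area growth assumption standing in this section.)

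The main obstacle I anticipate is the cutoff/integration-by-parts step in the noncompact setting: one must check that the Gaussian weight together with polynomial area growth makes $\int f^2 e^{-|x|^2/4}$ finite and positive and that the error terms involving $|\nabla\eta|^2 f^2$ and cross terms genuinely vanish in the limit. This is where the area-growth hypothesis is essential, and it parallels the estimates used in \cite{colding2012generic} and \cite{colding2012smooth}. A secondary point of care is that $L$ is not the second variational operator when $\lambda\neq 0$, so one should phrase the first step purely as an analytic statement about the operator $L$ (a positive solution of $Lu=0$ implies nonpositivity of the associated weighted quadratic form) rather than invoking stability of $\Sigma$ as a critical point; Lemma~\ref{qianglem} supplies exactly the eigenfunction with positive eigenvalue needed to close the contradiction regardless of the variational interpretation.
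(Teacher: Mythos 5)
Your proposal is correct and follows essentially the same route as the paper: the logarithmic substitution $w=\log u$ yielding the Gaussian-weighted stability inequality for compactly supported test functions, then testing against a cutoff times the eigenfunction $\langle v,\mathbf n\rangle$ from Lemma~\ref{qianglem} and using polynomial area growth to kill the cutoff error. The only cosmetic difference is that the paper chooses $v=\mathbf n(p)$ so that $f(p)=1$ and nonvanishing is automatic, whereas you rule out $f\equiv 0$ for all $v$ a posteriori; both are fine.
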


\begin{proof}
Set $w=\log u$, so that
\[\Delta w=\frac{\delta u}{u}-\vert\nabla w\vert^2=-\vert A\vert^2+\frac{1}{2}\langle x,\nabla w\rangle-\frac{1}{2}-\vert\nabla w\vert^2.\]
Then given $\phi$ with compact support on $\Sigma$, applying Stokes theorem to $\dv(\phi^2e^{\frac{-\vert x\vert^2}{4}}\nabla w)$ gives
\begin{equation}
\begin{split}
0&=\int_{\Sigma}(2\phi\langle\nabla\phi,\nabla w\rangle+(-\vert A\vert^2-\frac{1}{2}-\vert\nabla w\vert^2)\phi^2)e^{\frac{-\vert x\vert^2}{4}}\\
&\leq \int_{\Sigma}(\phi^2\vert\nabla w\vert^2+\vert\nabla\phi\vert^2-\vert A\vert^2-\frac{1}{2}\vert A\vert^2-\vert\nabla w\vert^2\phi^2)e^{\frac{-\vert x\vert^2}{4}}\\
&\leq\int_{\Sigma}(\vert\nabla\phi\vert^2-\vert A\vert^2\phi^2-\frac{1}{2}\phi^2)e^{\frac{-\vert x\vert^2}{4}}=-\int_{\Sigma}(\phi L\phi)e^{\frac{-\vert x\vert^2}{4}}.
\end{split}
\end{equation}
On the other hand, fix a point $p$ in $\Sigma$ and define a function $v(x)=\langle \mathbf n(p),\mathbf n(x)\rangle$. Then we know $v(p)=1$, $\vert v(p)\vert\leq1$, and by Lemma \ref{qianglem} $Lv=\frac{1}{2}v$. So we have
\[L(\eta v)=\frac{1}{2}\eta v+v(\Delta\eta-\frac{1}{2}\langle x,\nabla \eta\rangle)+2\langle\nabla\eta,\nabla v\rangle.\]
Then integrate $\eta v L(\eta v)$ we get that
\begin{equation}
\begin{split}
-\int_\Sigma\eta vL(\eta v)e^{\frac{-\vert x\vert^2}{4}}&=-\int_{\Sigma}(\frac{1}{2}\eta^2 v^2+\eta v^2(\Delta \eta-\frac{1}{2}\langle x,\nabla\eta\rangle)+\frac{1}{2}\langle\nabla\eta^2,\nabla v^2\rangle)e^{\frac{-\vert x\vert^2}{4}}\\
&=-\int_{\Sigma}(\frac{1}{2}\eta^2v^2-v^2\vert\nabla\eta\vert^2)e^{\frac{-\vert x\vert^2}{4}}.
\end{split}
\end{equation}
where the second inequality uses Stokes' theorem. Now we choose $\eta$ be identically $1$ on $B_R$ and cut-off linearly to $0$ on $B_{R+1}\setminus B_{R}$, we get

\begin{equation}
-\int_{\Sigma}\eta vL(\eta v)e^{\frac{-\vert x\vert^2}{4}}\leq\int_{\Sigma\setminus B_{R}}v^2e^{\frac{-\vert x\vert^2}{2}}-\frac{1}{2}\int_{\Sigma\cap B_R}v^2e^{\frac{-\vert x\vert^2}{4}}.
\end{equation}
Since $\Sigma$ has polynomial area growth and $\vert v\vert\leq1$, we have
\[\lim_{R\to\infty}\int_{\Sigma\setminus B_R}v^2 e^{\frac{-\vert x\vert^2}{4}}= 0.\]
So for $R$ sufficiently large, we have
\[\int_{\Sigma}\eta v L(\eta v)e^{\frac{-\vert x\vert^2}{4}}<0.\]

Then we get a contradiction if we set $u=\eta v$.
\end{proof}

\subsection{Difference of Two $\lambda$-Surfaces}
In previous section we defined the linearized operator to be the second variational operator of Gaussian weight of $\lambda$-surfaces. The reason it is called linearized operator is that for two $\lambda$-surfaces close enough, then their difference satisfies an second order elliptic equation, and the main linear term is just this linearized operator. This is a well-known result in minimal surfaces case, see \cite{simon1987strict}, \cite{kapouleas1990complete}.

\begin{theorem}\label{difference}
Suppose $\Sigma_1$ is a $\lambda_1$-surface and $\Sigma_2$ is a $\lambda_2$-surface. If $\Sigma_2$ can be written as a graph on $\Sigma_1$, namely suppose $\Sigma_1$ is a map $X:\Sigma_1\to\mb R^3$, then $\Sigma_2$ is $X+\varphi\mathbf{n}$ for some $C^2$ function $\varphi$. If on $\Sigma_1$, $\vert\varphi A\vert<1$, then $\varphi$ satisfies the equation
\begin{equation}
L\varphi+(\lambda_2-\lambda_1)=\dv(a\cdot\nabla\varphi)+b\cdot\nabla\varphi+c\varphi.
\end{equation}

Moreover if $\Sigma_i$ is a sequence of $\lambda_i$-surfaces converge to a $\lambda$-surface $\Sigma$ in the $C^2$ sense, i.e. $\Sigma_i=\Sigma+\varphi_i\mathbf{n}$ and $\varphi_i$ are $C^2$ converges to $0$. Suppose 
\begin{equation}
L\varphi_i+(\lambda_i-\lambda)=\dv(a_i\cdot\nabla\varphi_i)+b_i\cdot\nabla\varphi_i+c_i\varphi_i.
\end{equation}
Then $a_i,b_i,c_i$ all uniformly converges to $0$, and the right hand side of the equation uniformly converges to $0$.
\end{theorem}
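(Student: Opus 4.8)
The plan is to realize the $\lambda_2$-surface equation for $\Sigma_2$ as a single quasilinear elliptic equation for the graph function $\varphi$ over $\Sigma_1$ and then Taylor expand it about $\varphi\equiv0$. Writing $\Sigma_2=X+\varphi\mathbf{n}$, the hypothesis $\vert\varphi A\vert<1$ guarantees that $p\mapsto X(p)+\varphi(p)\mathbf{n}(p)$ is an immersion, so the induced metric $g(\varphi)$, unit normal $\mathbf{n}(\varphi)$, second fundamental form $A(\varphi)$, mean curvature $H(\varphi)=g^{ij}(\varphi)A_{ij}(\varphi)$ and position vector $x(\varphi)=X+\varphi\mathbf{n}$ of $\Sigma_2$ are all smooth functions of $(\varphi,\nabla\varphi,\nabla^2\varphi)$ whose coefficients are built from the geometry of $\Sigma_1$. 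Setting $\mathcal{N}(\varphi):=H(\varphi)-\tfrac12\langle x(\varphi),\mathbf{n}(\varphi)\rangle$, the statement that $\Sigma_2$ is a $\lambda_2$-surface is precisely $\mathcal{N}(\varphi)\equiv\lambda_2$ on $\Sigma_1$, whereas evaluating at $\varphi=0$ and using that $\Sigma_1$ is a $\lambda_1$-surface gives $\mathcal{N}(0)=H_{\Sigma_1}-\tfrac12\langle X,\mathbf{n}\rangle=\lambda_1$.

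The next step is to identify the linearization $D\mathcal{N}\vert_0$. Since the $\lambda$-surface equation is the Euler--Lagrange equation of the weighted functional $\mathcal{F}-\lambda\mathcal{V}$, with $\mathcal{V}$ the Gaussian-weighted enclosed volume, $D\mathcal{N}\vert_0$ is the associated second variation operator, and the standard variation formula for mean curvature together with the variation of $\langle x,\mathbf{n}\rangle$ under a field $\varphi\mathbf{n}$ gives $D\mathcal{N}\vert_0\varphi=-L\varphi$ (as a quick check: on the sphere $S(r)$, taking $\varphi$ constant turns $\mathcal{N}$ into $\tfrac{2}{r+\varphi}-\tfrac{r+\varphi}{2}$, whose $\varphi$-derivative at $0$ is $-\tfrac{2}{r^2}-\tfrac12=-(\vert A\vert^2+\tfrac12)$, i.e. $-L$ acting on the constant $1$). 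The essential point is that this operator does not involve $\lambda_2$; only the constant term does. Taylor's theorem then gives $\lambda_2=\mathcal{N}(\varphi)=\mathcal{N}(0)+D\mathcal{N}\vert_0\varphi+\mathcal{R}(\varphi)=\lambda_1-L\varphi+\mathcal{R}(\varphi)$, i.e. $L\varphi+(\lambda_2-\lambda_1)=\mathcal{R}(\varphi)$, where $\mathcal{R}$ gathers the quadratic and higher-order terms.

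It then remains to put $\mathcal{R}(\varphi)$ into the asserted form $\dv(a\cdot\nabla\varphi)+b\cdot\nabla\varphi+c\varphi$ with coefficients vanishing at $(\varphi,\nabla\varphi)=(0,0)$, and for this I would keep the equation in divergence form throughout. Pulling $\mathcal{F}(\Sigma_2(\varphi))$ back to $\Sigma_1$ along the normal graph map writes it as $\int_{\Sigma_1}\Phi(p,\varphi,\nabla\varphi)\,d\mu_1$, and the weighted volume between $\Sigma_1$ and $\Sigma_2(\varphi)$ as $\int_{\Sigma_1}\Psi(p,\varphi)\,d\mu_1$ by the tube formula; the Euler--Lagrange equation then reads $\partial_\varphi\Phi-\dv_{\Sigma_1}(\partial_{\nabla\varphi}\Phi)-\lambda_2\,\partial_\varphi\Psi=0$, so $\nabla^2\varphi$ enters $\mathcal{N}$ only through $\dv_{\Sigma_1}\mathcal{A}(p,\varphi,\nabla\varphi)$ with $\mathcal{A}=\partial_{\nabla\varphi}\Phi$ a function of $\varphi$ and $\nabla\varphi$ alone. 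Expanding $\mathcal{A}$ and the zeroth-order part $\mathcal{B}=\partial_\varphi\Phi-\lambda_2\,\partial_\varphi\Psi$ about $\varphi=0$ with Hadamard's lemma, the first-order parts reassemble $-L\varphi$ and the integral remainders are precisely of the form $\dv(a\cdot\nabla\varphi)$, $b\cdot\nabla\varphi$ and $c\varphi$ with $a,b,c$ smooth in $(p,\varphi,\nabla\varphi)$ and identically zero when $\varphi=0$. For the ``moreover'' part one sets $\Sigma_1=\Sigma$, the fixed limit $\lambda$-surface: the $a,b,c$ are then fixed smooth functions of $(p,\varphi,\nabla\varphi)$ vanishing at $\varphi=0$, and since the geometry of $\Sigma$ entering them is locally bounded, $\varphi_i\to0$ in $C^2$ on compact sets forces $a_i,b_i,c_i\to0$, hence $\dv(a_i\cdot\nabla\varphi_i)+b_i\cdot\nabla\varphi_i+c_i\varphi_i\to0$, uniformly on compact subsets.

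The main obstacle is the bookkeeping in the last step: one must verify that the top-order coefficient $a$ really involves only $\varphi$ and $\nabla\varphi$ --- which is exactly what the variational/divergence structure buys --- and that each coefficient vanishes to the correct order at $\varphi=0$; the heaviest part of the computation is the explicit expansion of $\mathbf{n}(\varphi)$ and of the Jacobian of the normal exponential map in $(\varphi,\nabla\varphi)$. Granting this, the identification $D\mathcal{N}\vert_0=-L$ independently of $\lambda_2$ and the final uniform-convergence statement are then routine.
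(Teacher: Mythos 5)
Your argument is correct in outline but takes a genuinely different route from the paper. The paper does not linearize the full operator itself: it imports Lemma C.2 from Kapouleas \cite{kapouleas1990complete}, which gives the explicit expansion
$H_1-H_2-(\Delta\varphi+\vert A\vert^2\varphi)=\tilde G_1/\sqrt{1+G_1}+\tilde G_2/(1+G_1+\sqrt{1+G_1})$,
where every term of $G_i,\tilde G_i$ is a contraction of at least two factors from $\{\varphi A,\nabla\varphi\}$ (or one such factor against $\nabla^2\varphi$, $\varphi\nabla A$, etc.); the lower-order contribution of $\tfrac12\langle x,\mathbf n\rangle$ is then added by hand via the $\lambda$-surface equations. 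You instead define $\mathcal N(\varphi)=H(\varphi)-\tfrac12\langle x(\varphi),\mathbf n(\varphi)\rangle$, verify $D\mathcal N\vert_0=-L$ (your computation and the sphere check are right, and indeed $D\mathcal N\vert_0$ does not see $\lambda_2$), and control the Taylor remainder through the variational/divergence structure plus Hadamard's lemma. What the paper's route buys is the explicit quadratic structure of the remainder for free, which is exactly what makes Remark \ref{remarklinearlizedequation} (the bound $\Vert P\Vert_{C^1}\leq C\Vert\varphi\Vert_{C^1}$, used later) immediate; what your route buys is self-containedness and a cleaner conceptual reason why only the constant term depends on $\lambda_2$. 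One caveat you should address: the Euler--Lagrange equation of the pulled-back functional is $(\mathcal N(\varphi)-\lambda_2)\,J(p,\varphi,\nabla\varphi)=0$ with $J>0$ the (weighted) Jacobian of the graph map, not $\mathcal N(\varphi)-\lambda_2=0$ itself, so linearizing the EL equation at $\varphi=0$ produces an extra first-order term $(\lambda_1-\lambda_2)\,DJ\vert_0\varphi$ coming from $\mathcal N(0)-\lambda_2=\lambda_1-\lambda_2\neq0$. This is harmless --- in the first assertion no smallness of $a,b,c$ is claimed, and in the ``moreover'' part $\lambda_i-\lambda\to0$ kills it --- but it must be absorbed into $b\cdot\nabla\varphi+c\varphi$ explicitly rather than swept into the quadratic remainder. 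With that accounted for, and with the acknowledged bookkeeping that the top-order coefficient $a$ depends only on $(\varphi,\nabla\varphi)$ and vanishes at $\varphi=0$, your proof goes through.
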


\begin{proof}
In this proof $H_i$ will be the mean curvature on $\Sigma_i$. We refer to Kapouleas' paper \cite{kapouleas1990complete}, Appendix 3. Note in \cite{kapouleas1990complete}, his $H$ is our $H$ with a factor $-\frac{1}{2}$. 

We follow Kapouleas to use $\Phi$ to denote a term which can be either $\varphi A$ or $\nabla\varphi$, and use $\star$ to denote contraction with respect to the metric on $\Sigma_1$. $G_i$ stands for linear combinations with universal coefficients of terms which are contractions with respect to the metric on $\Sigma_1$ of at least two $\Phi$'s, and $\tilde G_i$ stands for linear combinations with universal coefficients of terms which are contraction of a number of (possibly none) $\Phi$'s with one of the following:
\begin{itemize}
\item $A\star\Phi\star\Phi$
\item $\varphi\nabla A\star\Phi$
\item $\varphi A\star\nabla^2\varphi$
\item $\nabla^2\varphi\star\Phi\star\Phi$
\end{itemize}
So all the terms are linear in  at most the second derivative of $\varphi$.

By Lemma C.2 in \cite{kapouleas1990complete}, if $\vert\varphi A\vert<1$ we have
\begin{equation}
H_1-H_2-(\Delta\varphi+\vert A\vert^2\varphi)=\frac{\tilde G_1}{\sqrt{1+G_1}}+\frac{\tilde G_2}{1+G_1+\sqrt{1+G_1}}.
\end{equation}
Plugging in the equation of $\lambda$-surfaces gives
\begin{equation}
-(\Delta\varphi+\vert A\vert^2\varphi)+\frac{1}{2}\langle x,\mathbf{n}\rangle-\frac{1}{2}\langle x+\varphi\mathbf n,\mathbf{n}\rangle+\lambda_1-\lambda_2=\frac{\tilde G_1}{\sqrt{1+G_1}}+\frac{\tilde G_2}{1+G_1+\sqrt{1+G_1}}.
\end{equation}
Then we get
\begin{equation}
L\varphi+\lambda_2-\lambda_1=-\frac{\tilde G_1}{\sqrt{1+G_1}}-\frac{\tilde G_2}{1+G_1+\sqrt{1+G_1}}=\dv(a\nabla\varphi)+b\cdot\nabla\varphi+c\varphi.
\end{equation}
where we write the coefficients to $a,b,c$. When $\varphi\to 0$ in $C^2$, since $G_i,\tilde G_i$ has universal coefficients, and $\Phi$ terms will uniformly converge to $0$, we get $a,b,c$ uniformly converge to $0$.
\end{proof}
\begin{remark}\label{remarklinearlizedequation}
We can get more information from the calculation
\begin{equation}
L\varphi+\lambda_2-\lambda_1=-\frac{\tilde G_1}{\sqrt{1+G_1}}-\frac{\tilde G_2}{1+G_1+\sqrt{1+G_1}}.
\end{equation}
Let the right hand side of the equation to be $P(x,\varphi,\nabla \varphi,\nabla^2\varphi)$. Then from the computation in K paper,
\begin{equation}
\Vert P(x,\varphi,\nabla \varphi,\nabla^2\varphi)\Vert_{C^1}\leq C\Vert \varphi\Vert_{C^1}.
\end{equation}
More precisely, in the definition of $G$ and $\tilde G$, each $\varphi,\nabla\varphi,\nabla^2\varphi$ term contracts with at least one other $\varphi$ or $\nabla\varphi$ term. So the coefficients of linearization of $P$ must also consist at least one other $\varphi$ or $\nabla\varphi$ term. So it is bounded by $C^1$ norm of $\varphi$.

This bound would play a role in later section.
\end{remark}

\section{Compactness}\label{Section:Compactness}
\subsection{Choi-Schoen Type Curvature Estimate}
In this section we are going to prove the main compactness theorem. In order to prove the compactness theorem, we need to get the curvature bound of $\lambda$-surfaces. First we show there is a uniform integral bound for $\lambda$-surface if $\lambda$ is bounded. We need the local Gauss-Bonnet estimate by Ilmanen \cite{ilmanen1995singularities}.

\begin{theorem}[Theorem 3 in \cite{ilmanen1995singularities}]\label{GaussBonnet}
Let $R>r>0$ and let $M$ be a $2$-manifold properly immersed in $B_R$. Then for any $\eps>0$
\begin{equation}
(1-\eps)\int_{M\cap B_r}\vert A\vert^2d\mu\leq\int_{M\cap B_R}H^2d\mu+8\pi g(M\cap B_R)+\frac{24\pi D'R^2}{\eps(R-r)^2}.
\end{equation}
Here 
\[D'=\sup_{s\in[r,R]}\frac{\area(M\cap B_s)}{\pi s^2}.\]
\end{theorem}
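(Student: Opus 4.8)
The statement is a local Gauss--Bonnet estimate, and the plan is to reduce it to the Gauss equation together with the Gauss--Bonnet theorem applied on a carefully chosen geodesic sphere slice. First I would invoke the Gauss equation $\vert A\vert^2=H^2-2K$, $K$ the intrinsic Gaussian curvature, so that for every $s$
\[
\int_{M\cap B_s}\vert A\vert^2\,d\mu=\int_{M\cap B_s}H^2\,d\mu-2\int_{M\cap B_s}K\,d\mu .
\]
Since $B_r\subset B_s\subset B_R$ for $s\in[r,R]$, the goal becomes to exhibit a single radius $s\in[r,R]$ for which
\[
-2\int_{M\cap B_s}K\,d\mu\le 8\pi\,g(M\cap B_R)+\eps\int_{M\cap B_R}\vert A\vert^2\,d\mu+\frac{C\,D'R^2}{\eps(R-r)^2};
\]
the $H^2$ term is then dominated by $\int_{M\cap B_R}H^2$, and the $\eps$-term is moved to the left to produce the factor $1-\eps$.

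To produce such an $s$ I would put $\rho=\vert x\vert$ restricted to $M$. By Sard's theorem almost every $s\in[r,R]$ is a regular value, so $\Gamma_s:=M\cap\partial B_s$ is a finite disjoint union of smooth closed curves, $M\cap B_s$ is a smooth surface with boundary $\Gamma_s$, and along $\Gamma_s$ one has the identity $\vert\nabla^M\rho\vert=\vert x^T\vert/s$, where $x^T$ is the tangential part of the position vector. Gauss--Bonnet on $M\cap B_s$ then gives
\[
-2\int_{M\cap B_s}K\,d\mu=-4\pi\,\chi(M\cap B_s)+2\int_{\Gamma_s}\kappa_g\,d\mathcal H^1 ,
\]
$\kappa_g$ the geodesic curvature of $\Gamma_s$ in $M$. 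Capping the boundary circles with topological disks and using that genus is monotone under passage to subsurfaces, $-\chi(M\cap B_s)\le 2g(M\cap B_R)+b(s)$, where $b(s)$ is the number of components of $\Gamma_s$; this produces the $8\pi\,g(M\cap B_R)$ and reduces matters to bounding $4\pi\,b(s)+2\int_{\Gamma_s}\kappa_g\,d\mathcal H^1$ by the error term.

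For the boundary integral I would use that $\Gamma_s$ lies on the round sphere $\partial B_s$: parametrizing a component by arclength $\gamma$ and differentiating $\vert\gamma\vert^2\equiv s^2$ twice gives $\langle x,\ddot\gamma\rangle=-1$, and projecting the ambient curvature vector $\ddot\gamma$ onto $M$ (the outward conormal being $x^T/\vert x^T\vert$) yields $\kappa_g=-(1+\langle x,\mathbf n\rangle A(\dot\gamma,\dot\gamma))/\vert x^T\vert$, whence, since $\vert\langle x,\mathbf n\rangle\vert\le s$,
\[
\vert\kappa_g\vert\le\frac{1}{s\,\vert\nabla^M\rho\vert}+\frac{\vert A\vert}{\vert\nabla^M\rho\vert}\qquad\text{on }\Gamma_s .
\]
The crucial observation is that integrating this over $\Gamma_s$ and then over $s\in[r,R]$ by the coarea formula cancels the factor $\vert\nabla^M\rho\vert^{-1}$ against the coarea Jacobian, leaving
\[
\int_r^R\Big(\int_{\Gamma_s}\vert\kappa_g\vert\,d\mathcal H^1\Big)\,ds\le\int_{M\cap(B_R\setminus B_r)}\Big(\frac1\rho+\vert A\vert\Big)\,d\mu .
\]
I would then estimate the first integral by a dyadic decomposition of the annulus into pieces $B_{2^{k+1}r}\setminus B_{2^kr}$, on which $\rho\gtrsim 2^kr$ while $\area\le\pi D'(2^{k+1}r)^2$ by the very definition of $D'$; summing the resulting geometric series gives a bound $C\,D'R$. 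The second integral I would bound by Cauchy--Schwarz and Young's inequality with a scale-dependent split, $\le\eps(R-r)\int_{M\cap B_R}\vert A\vert^2\,d\mu+C\,D'R^2/(\eps(R-r))$. Choosing $s$ at which the $(R-r)^{-1}$-average is attained, multiplying by $2$, and substituting back into the first two steps yields the claimed inequality, with the $4\pi\,b(s)$ contribution absorbed by a parallel estimate.

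The hardest part will be the treatment of the boundary term, specifically the number of boundary circles $b(s)$: it can be large, but it changes only across critical points of $\rho\vert_M$ (the points where $x^T=0$), and the slice $s$ must be chosen so that the numerous short circles get charged against the area-ratio bound $D'$ — equivalently against $\int_{\Gamma_s}\vert x^T\vert^{-1}\,d\mathcal H^1$, which blows up exactly where $\Gamma_s$ is nearly tangent to $\partial B_s$ — while the long circles remain few. The accompanying analytic subtlety, the $\vert x^T\vert^{-1}$ degeneracy at near-tangential slices, is precisely what the coarea Jacobian neutralizes on average; once the correct radius has been isolated, the remainder is Cauchy--Schwarz and the uniform area-ratio bound.
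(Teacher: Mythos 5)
First, a point of reference: the paper does not prove this statement — it is quoted verbatim as Theorem 3 of Ilmanen's 1995 preprint — so the only meaningful comparison is with Ilmanen's original argument. Your skeleton (Gauss equation, Gauss--Bonnet on a slice $M\cap B_s$, the identity $\kappa_g=-(1+\langle x,\mathbf n\rangle A(\dot\gamma,\dot\gamma))/\vert x^T\vert$, the coarea formula cancelling $\vert\nabla^M\rho\vert^{-1}$, and Cauchy--Schwarz against the area ratio $D'$) is indeed that argument. But the step that produces the factor $1-\eps$ does not close as you have written it. Your Peter--Paul inequality converts $\frac{1}{R-r}\int_{M\cap(B_R\setminus B_r)}\vert A\vert$ into $\eps\int_{M\cap B_R}\vert A\vert^2+CD'R^2/(\eps(R-r)^2)$, and you then "move the $\eps$-term to the left." The left-hand side, however, is the energy on the \emph{small} ball, while the error is $\eps$ times the energy on the annulus $B_R\setminus B_r$, which is disjoint from $B_r$: subtracting gives $\int_{M\cap B_r}\vert A\vert^2-\eps\int_{M\cap B_R}\vert A\vert^2\leq(1-\eps)\int_{M\cap B_r}\vert A\vert^2$, which is the wrong direction, and nothing on the right-hand side of the theorem controls $\int_{M\cap B_R}\vert A\vert^2$ a priori. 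The repair is to not fix a single good slice but to average the Gauss--Bonnet identity over $s\in[r,R]$ against the weight $2(R-s)/(R-r)^2$ (equivalently, to run the argument with the cutoff $\phi=\min\{1,(R-\rho)/(R-r)\}$ and the layer-cake decomposition of $\int_M\phi^2K$). The boundary error then becomes $\frac{C}{(R-r)^2}\int_{M\cap(B_R\setminus B_r)}(R-\rho)\vert A\vert\,d\mu$, and Young's inequality with the matched weight yields $\eps\int_M\phi^2\vert A\vert^2+CD'R^2/(\eps(R-r)^2)$; since $\int_M\phi^2\vert A\vert^2$ is precisely the quantity on the left and dominates $\int_{M\cap B_r}\vert A\vert^2$, the absorption is now legitimate and gives the stated $(1-\eps)$.

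The second gap is the count $b(s)$ of boundary circles. Charging $b(s)$ against $\int_{\Gamma_s}\vert x^T\vert^{-1}$, as you suggest, cannot work: a short circle along which $M$ meets $\partial B_s$ transversally has $\vert x^T\vert\approx s$ and arbitrarily small length, so $\int\vert x^T\vert^{-1}$ admits no per-component lower bound. The correct tool is Fenchel's theorem: each closed component of $\Gamma_s$ has total curvature $\int\vert\ddot\gamma\vert\geq 2\pi$ as a space curve, and $\vert\ddot\gamma\vert\leq\vert\kappa_g\vert+\vert A\vert$, so $2\pi b(s)\leq\int_{\Gamma_s}(\vert\kappa_g\vert+\vert A\vert)$, which the same coarea and Cauchy--Schwarz machinery already controls. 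With these two repairs your outline becomes Ilmanen's proof.
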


Note for a $\lambda$-surface $\Sigma$, its mean curvature satisfies $\vert H\vert=\vert\frac{1}{2}\langle x,N\rangle+\lambda\vert\leq \vert x\vert+\vert\lambda\vert$. Then we choose $\eps=1/2$ in the theorem we get the following uniformly integral curvature bound:

\begin{corollary}
Suppose $\Sigma$ is a $\lambda$-surface with $\vert\lambda\vert\leq \Lambda$, genus no more than $g$ and $\area(B_s(x_0)\cap\Sigma)\leq Vs^2$ for $r<s<2r$, then we get uniformly integral curvature bound
\begin{equation}
\int_{M\cap B_r}\vert A\vert d\mu\leq C(\Lambda,g,V,x_0).
\end{equation}
\end{corollary}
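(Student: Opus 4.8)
The plan is to feed Ilmanen's local Gauss--Bonnet estimate (Theorem~\ref{GaussBonnet}) the pointwise bound on $H$ coming from the $\lambda$-surface equation, extract from it a uniform $L^2$ bound for $A$, and then pass to the asserted $L^1$ bound by Cauchy--Schwarz. Throughout I take all the balls centred at $x_0$, and I read the constant in the conclusion as allowed to also depend on the fixed radius $r$ (it must, since $\int H^2$ over a ball grows with that ball's radius).

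First I would fix an intermediate radius, say $R=\tfrac32 r$, so that $R\in(r,2r)$ and the area hypothesis $\area(B_s(x_0)\cap\Sigma)\le Vs^2$ is available for all $s\in(r,2r)$; in particular $\area(\Sigma\cap B_R)\le VR^2$ and, by monotonicity, $\area(\Sigma\cap B_r)\le Vr^2$, while the quantity $D'=\sup_{s\in[r,R]}\area(\Sigma\cap B_s)/(\pi s^2)$ appearing in Theorem~\ref{GaussBonnet} satisfies $D'\le V/\pi$. Since $\Sigma\cap B_R$ is a properly immersed $2$-manifold in $B_R$, Theorem~\ref{GaussBonnet} with $\eps=\tfrac12$ gives
\[
\tfrac12\int_{\Sigma\cap B_r}|A|^2\,d\mu\ \le\ \int_{\Sigma\cap B_R}H^2\,d\mu\ +\ 8\pi\,g(\Sigma\cap B_R)\ +\ \frac{48\pi D'R^2}{(R-r)^2}.
\]
Now I bound the three terms on the right. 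On $\Sigma\cap B_R$ the equation $H=\tfrac12\langle x,\mathbf n\rangle+\lambda$ gives $|H|\le|x|+|\lambda|\le|x_0|+R+\Lambda$, so $\int_{\Sigma\cap B_R}H^2\,d\mu\le(|x_0|+R+\Lambda)^2VR^2$. Next $g(\Sigma\cap B_R)\le g$, since a compact sub-piece of an embedded surface of genus $\le g$ again has genus $\le g$. Finally $R^2/(R-r)^2=9$ for $R=\tfrac32 r$, and $D'\le V/\pi$, so the last term is at most $432\,V$. Summing, $\int_{\Sigma\cap B_r}|A|^2\,d\mu\le C_1(\Lambda,g,V,x_0,r)$, a bound depending only on the stated data and not on the particular surface $\Sigma$.

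To finish, Cauchy--Schwarz gives
\[
\int_{\Sigma\cap B_r}|A|\,d\mu\ \le\ \Big(\int_{\Sigma\cap B_r}|A|^2\,d\mu\Big)^{1/2}\area(\Sigma\cap B_r)^{1/2}\ \le\ \big(C_1\,Vr^2\big)^{1/2},
\]
which is the claimed estimate. I do not expect a genuine obstacle here: the whole substance is Ilmanen's theorem together with the elementary bound on $H$, and the only points needing a little care are (i) choosing the intermediate radius $R$ so that the single annular area hypothesis simultaneously controls $\area(\Sigma\cap B_R)$ and the ratio $D'$, (ii) checking that $\Sigma$ is proper so Theorem~\ref{GaussBonnet} genuinely applies, and (iii) the observation that the genus of the sub-piece $\Sigma\cap B_R$ does not exceed $g$.
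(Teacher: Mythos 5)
Your proof is correct and is essentially the paper's argument: apply Ilmanen's local Gauss--Bonnet estimate with $\eps=\tfrac12$, bound $\int H^2$ via the pointwise bound $|H|\le|x|+|\lambda|$ from the $\lambda$-surface equation, and control the genus and $D'$ terms by the hypotheses (the paper takes $R=2r$ where you take $R=\tfrac32 r$, an immaterial difference). Your closing Cauchy--Schwarz step and the remark that the constant depends on $r$ address, respectively, what is almost certainly a typo in the statement ($|A|$ should be $|A|^2$, as everywhere else in the paper) and a genuine small omission in the paper's list of dependencies.
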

\begin{proof}
Just choose $\eps=1/2$ and $R=2r$ in the local Gauss-Bonnet estimate.
\end{proof}

Then we will follow the idea of Choi-Schoen in \cite{choi1985space} to prove a pointwise curvature bound from a small integral curvature bound. Choi-Schoen did this estimate for minimal surfaces in three manifold, and later Colding-Minicozzi did this estimate for self-shrinkers in $\mb R^3$. We need to generalize this type of estimate to $\lambda$-surfaces.

\begin{theorem}\label{CS}
Let $\Sigma$ be a $\lambda$-surface in $\mb R^3$ without boundary, $\vert\lambda\vert\leq\vert\Lambda\vert$. Let $x_0$ be a point in $\Sigma$. Then there exists $\eps_0>0$ such that if
\[\int_{\Sigma\cap B_r(x_0)}\vert A\vert^2\leq\eps_0\] and $r\leq\eps_0$, then the following inequality holds
\begin{equation}
\max_{0\leq\sigma\leq r}\sigma^2\sup_{B_{r-\sigma}(x_0)}\vert A\vert^2\leq C.
\end{equation}
where $C$ is a constant depending on $\Lambda$ and $\vert x_0\vert$. 
\end{theorem}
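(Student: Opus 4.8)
The plan is to adapt the Choi--Schoen blow-up argument to the $\lambda$-surface setting, the key point being that the first-order term $\langle x,\mathbf n\rangle/2+\lambda$ in the mean curvature equation is, on a small ball, a lower-order perturbation that does not affect the scaling structure of the estimate. First I would introduce the function $F(\sigma)=\sigma^2\sup_{B_{r-\sigma}(x_0)}|A|^2$ on $[0,r]$, note $F$ is continuous with $F(0)=F(r)=0$ (the latter since $x_0\in\Sigma$ and $A$ is locally bounded on the fixed surface $\Sigma$ we are actually looking at, though of course the point is to get a bound \emph{independent} of that), and pick $\sigma_0\in(0,r)$ achieving the maximum $F(\sigma_0)$. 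Let $y_0\in B_{r-\sigma_0}(x_0)$ be a point with $|A|^2(y_0)=\sup_{B_{r-\sigma_0}}|A|^2=:a^2$. The argument is by contradiction: if the theorem fails, there is a sequence of $\lambda_i$-surfaces $\Sigma_i$ (with $|\lambda_i|\le\Lambda$), points $x_0^i$, radii $r_i\le\eps_i\to0$, with $\int_{\Sigma_i\cap B_{r_i}(x_0^i)}|A|^2\le\eps_i$ but $F_i(\sigma_0^i)=\sigma_0^{i\,2}a_i^2\to\infty$.

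Next I would rescale. On $B_{\sigma_0^i/2}(y_0^i)$ one has $\sup|A|^2\le 4a_i^2$ by the maximality of $\sigma_0^i$ (the standard Choi--Schoen halving trick, using that $F_i(\sigma_0^i/2)\le F_i(\sigma_0^i)$ forces $\sup_{B_{r_i-\sigma_0^i/2}}|A|^2\le 4a_i^2$ and $B_{\sigma_0^i/2}(y_0^i)\subset B_{r_i-\sigma_0^i/2}(x_0^i)$). Define $\widetilde\Sigma_i$ by translating $y_0^i$ to the origin and dilating by $a_i$; then $\widetilde\Sigma_i$ satisfies $|\widetilde A_i|^2(0)=1$, $\sup_{B_{a_i\sigma_0^i/2}(0)}|\widetilde A_i|^2\le4$, with $a_i\sigma_0^i/2\to\infty$, and $\int_{\widetilde\Sigma_i}|\widetilde A_i|^2\le\eps_i\to0$. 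The crucial computation is what equation $\widetilde\Sigma_i$ satisfies: the dilation turns $H=\langle x,\mathbf n\rangle/2+\lambda_i$ into $\widetilde H_i=a_i^{-1}\bigl(\langle y_0^i+a_i^{-1}\widetilde x,\mathbf n\rangle/2+\lambda_i\bigr)$, whose magnitude is $O(a_i^{-1}(|y_0^i|+\Lambda)+a_i^{-2}|\widetilde x|)$; since $a_i\to\infty$ (because $\sigma_0^i\le r_i\le\eps_i\to0$ forces $a_i\to\infty$), this tends to $0$ locally uniformly together with all its derivatives. So on compact sets the $\widetilde\Sigma_i$ have uniformly bounded $|A|$ and mean curvature tending to zero, hence by standard elliptic estimates (graph representation plus Schauder) subconverge in $C^\infty_{loc}$ to a complete minimal surface $\widetilde\Sigma_\infty$ through the origin with $|A|(0)=1$ — but $\int_{\widetilde\Sigma_\infty}|A|^2=0$ by Fatou, so $\widetilde\Sigma_\infty$ is a plane, contradicting $|A|(0)=1$.

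I would need a few supporting estimates to make the subconvergence rigorous: a uniform local area bound for $\widetilde\Sigma_i$ near $0$ (this follows from the monotonicity-type or simply from the hypothesis (1) of the main theorem, rescaled, since hypothesis (1) is scale-invariant in the relevant sense — or one can derive it from the bounded second fundamental form via a standard argument controlling the surface as a multigraph), and the observation that with $|A|$ bounded on $B_1(0)\subset\widetilde\Sigma_i$ and $\widetilde H_i\to0$, the surface is locally a union of graphs of functions with uniform $C^{2,\alpha}$ bounds solving a uniformly elliptic quasilinear equation converging to the minimal surface equation. The main obstacle I anticipate is precisely the passage from the pointwise curvature bound to the uniform local \emph{area} and hence graphical representation of the rescaled surfaces: one must rule out that small pieces of $\widetilde\Sigma_i$ with bounded curvature nonetheless accumulate with unbounded area or fold over badly. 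For $\lambda$-surfaces this is where one invokes that $H$ is controlled (here tending to $0$ after rescaling) so that the surfaces are, on the unit ball, genuine multigraphs with a bounded number of sheets; combined with the genus bound this limits the topology and the argument closes exactly as in Choi--Schoen and Colding--Minicozzi, with $\eps_0$ chosen small enough that both $r\le\eps_0$ and the integral smallness feed the contradiction, and the final constant $C$ depending on $\Lambda$ and $|x_0|$ through the bounds on $\widetilde H_i$.
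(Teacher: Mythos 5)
Your argument is correct in outline, but it is a genuinely different route from the paper's. You run the classical Choi--Schoen point-picking plus blow-up: rescale at the near-maximum of $|A|$, observe that the rescaled surfaces satisfy $\widetilde H_i\to 0$ locally uniformly, extract a limit which is minimal with vanishing total curvature yet $|\widetilde A|(0)=1$, contradiction. The paper deliberately avoids the blow-up (remarking that a rescaled $\lambda$-surface is no longer a $\lambda$-surface) and instead argues quantitatively at the original scale: it combines the Simons-type inequality of Lemma~\ref{simons}, which gives $\mc L|A|^2\geq -(C(\Lambda)+\alpha)|A|^2$ on the ball where $|A|^2\le 4\alpha$, with the drift-Laplacian mean value inequality of Lemma~\ref{monotonicity}, carefully tracking how the constant depends on the scale $t=\sigma_0/2$ and on $\alpha$, to conclude $|A|^2(y)\leq C(\Lambda,|y|)\,\eps_0$ directly, contradicting $|A|^2(y)\geq 4\sigma_0^{-2}$. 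Your approach buys softness and generality (it works for any equation whose rescalings limit to the minimal surface equation, and needs no monotonicity formula), at the price of a compactness step; the paper's approach buys an explicit, compactness-free estimate at the price of the monotonicity computation. Two small points to tighten in your write-up: (i) you cannot invoke hypothesis (1) of Theorem~\ref{mainthm} for the area bound, since Theorem~\ref{CS} does not assume it --- but you do not need any global area bound, because with $|\widetilde A_i|\leq 2$ it suffices to pass to the limit on the single graphical piece of $\widetilde\Sigma_i$ through the origin, which is a graph with uniform $C^{1,1}$ bounds over a disk of definite radius in its tangent plane, and whose minimal limit with zero total curvature is already flat enough to contradict $|\widetilde A|(0)=1$; and (ii) $F(r)=r^2|A|^2(x_0)$ need not vanish, so the maximum is attained on $(0,r]$ rather than $(0,r)$ (this changes nothing), and in the contradiction setup the centers $x_0^i$ should be taken with $|x_0^i|$ bounded, consistent with the constant $C$ being allowed to depend on $|x_0|$.
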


Before the proof of Theorem \ref{CS}, we need some lemmas for $\lambda$-surfaces. The first two lemmas are computations of the drift Laplacian of various quantities. See \cite{colding2012generic}. Recall that from \cite{colding2012generic} the drift Laplacian operator $\mc L$ is defined by
\[\mc L=\Delta-\frac{1}{2}\langle x,\nabla\cdot\rangle=e^{\frac{\vert x\vert^2}{4}}\dv(e^{\frac{-\vert x\vert^2}{4}}\nabla\cdot ).\]

\begin{lemma}\label{simons}
Suppose $\Sigma$ is a $\lambda$-surface with $\vert\lambda\vert\leq\Lambda$, then
\begin{equation}
\mc L\vert A\vert^2=2(\frac{1}{2}-\vert A\vert^2)\vert A\vert^2-2\lambda\langle A^2,A\rangle+2\vert\nabla A\vert^2\geq -C(\Lambda)(\vert A\vert^2+\vert A\vert^4).
\end{equation}
\end{lemma}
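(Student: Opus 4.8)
The plan is to compute the drift Laplacian $\mc L\vert A\vert^2$ directly from Simons-type identities, using the $\lambda$-surface equation in place of the self-shrinker equation, and then bound the bad terms. First I would recall the standard Simons' identity for a general hypersurface in $\mb R^3$: $\Delta_\Sigma a_{ij} = \nabla_i\nabla_j H + H a_{ik}a_{kj} - \vert A\vert^2 a_{ij}$ (where I use the ambient-flat form, so the curvature terms collapse). Contracting against $a_{ij}$ and using $\Delta\vert A\vert^2 = 2a_{ij}\Delta a_{ij} + 2\vert\nabla A\vert^2$ gives
\[
\Delta\vert A\vert^2 = 2\langle A,\Hess H\rangle + 2H\langle A^2,A\rangle - 2\vert A\vert^4 + 2\vert\nabla A\vert^2,
\]
where $\langle A^2,A\rangle = a_{ij}a_{jk}a_{ki}$. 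The point is then to replace $H$ via the $\lambda$-surface equation $H = \frac{1}{2}\langle x,\mathbf n\rangle + \lambda$.

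The key computational step is handling the $\langle A,\Hess H\rangle$ term. Differentiating $H = \frac12\langle x,\mathbf n\rangle+\lambda$ twice along $\Sigma$ (exactly as in the proof of Lemma \ref{qianglem}, where one already sees $\nabla_{e_i}H = -a_{ij}\langle x,e_j\rangle$), one gets an expression for $\nabla_k\nabla_i H$ in terms of $\nabla A$, $A$, and $\langle x, \cdot\rangle$; contracting with $a_{ik}$ and using the Codazzi equation to symmetrize, the $\nabla A$ contributions should reorganize into $\frac12\langle x,\nabla\vert A\vert^2\rangle$ plus lower-order pieces, while the remaining algebraic terms produce $-\vert A\vert^2\cdot\frac{1}{?}$ and a term $-\langle x,\mathbf n\rangle\langle A^2,A\rangle$. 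Combined with the $2H\langle A^2,A\rangle$ term from Simons, the $\langle x,\mathbf n\rangle$ contributions cancel (this is the mechanism that makes the self-shrinker identity clean) and one is left with a leftover $+2\lambda\langle A^2,A\rangle$ from the $\lambda$ in the equation — actually $-2\lambda\langle A^2,A\rangle$ after moving things to the $\mc L$ side. Assembling, and moving the $\frac12\langle x,\nabla\vert A\vert^2\rangle$ term to the left to form $\mc L\vert A\vert^2 = \Delta\vert A\vert^2 - \frac12\langle x,\nabla\vert A\vert^2\rangle$, yields
\[
\mc L\vert A\vert^2 = 2\Bigl(\tfrac12-\vert A\vert^2\Bigr)\vert A\vert^2 - 2\lambda\langle A^2,A\rangle + 2\vert\nabla A\vert^2,
\]
which is the claimed equality.

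For the inequality, I would simply discard $2\vert\nabla A\vert^2 \ge 0$, estimate $\vert\langle A^2,A\rangle\vert \le \vert A\vert^3$ (each entry of $A^2\cdot A$ contracted, Cauchy–Schwarz), and use $\vert\lambda\vert\le\Lambda$ to get $\vert 2\lambda\langle A^2,A\rangle\vert \le 2\Lambda\vert A\vert^3 \le \Lambda(\vert A\vert^2+\vert A\vert^4)$ by Young's inequality. The term $2(\frac12-\vert A\vert^2)\vert A\vert^2 = \vert A\vert^2 - 2\vert A\vert^4 \ge -2\vert A\vert^4$. Collecting, $\mc L\vert A\vert^2 \ge -C(\Lambda)(\vert A\vert^2+\vert A\vert^4)$ for a suitable $C(\Lambda)$. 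I expect the main obstacle to be purely bookkeeping: carrying out the double differentiation of $\langle x,\mathbf n\rangle$ along $\Sigma$ and verifying that all the first-derivative-of-$A$ cross terms (other than the ones forming $\langle x,\nabla\vert A\vert^2\rangle$) genuinely cancel via Codazzi, since a sign error there would spoil the cancellation of the $\langle x,\mathbf n\rangle\langle A^2,A\rangle$ terms. This is exactly the computation carried out for self-shrinkers in \cite{colding2012generic}, so I would follow that reference and track the single extra $\lambda$-dependent term.
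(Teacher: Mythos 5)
Your derivation is correct and is essentially the paper's own route: the paper gives no computation here and simply cites Lemma 2.1 of \cite{guang2014gap}, which carries out exactly the Simons-identity argument you describe (contract Simons' identity with $A$, substitute $H=\tfrac12\langle x,\mathbf n\rangle+\lambda$, let the Codazzi-symmetrized $\nabla A$ terms form $\tfrac12\langle x,\nabla\vert A\vert^2\rangle$ to absorb the drift, and let the $\langle x,\mathbf n\rangle\langle A^2,A\rangle$ contributions cancel, leaving the single $\lambda$ term). The only caveat is that the sign of the $2\lambda\langle A^2,A\rangle$ term depends on the sign convention relating $A$ to $\mathbf n$, but since $\vert\langle A^2,A\rangle\vert\leq\vert A\vert^3$ this is immaterial for the inequality, which is all that is used later, and your final estimate via $2\Lambda\vert A\vert^3\leq\Lambda(\vert A\vert^2+\vert A\vert^4)$ is complete and correct.
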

\begin{proof}
We refer the proof to Lemma 2.1 in \cite{guang2014gap}.
\end{proof}

\begin{lemma}\label{lemmaLx}
For $x_0$ a fixed point in $\mb R^3$,
\begin{equation}
\mc L\vert x-x_0\vert^2=-\langle x,x-x_0\rangle-2\lambda\langle\mathbf n,x-x_0\rangle+4.
\end{equation}
\end{lemma}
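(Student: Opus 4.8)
The plan is a direct computation: split $\mc L\vert x-x_0\vert^2=\Delta_\Si\vert x-x_0\vert^2-\frac12\langle x,\nabla_\Si\vert x-x_0\vert^2\rangle$ into the Laplacian term and the drift term, evaluate each separately, and then invoke the $\lambda$-surface equation to collapse the terms that are quadratic in $\mathbf{n}$.

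\emph{Step 1 (gradient and Laplacian).} Since for a constant vector $v$ one has $\nabla_\Si\langle x-x_0,v\rangle=v^T$, the projection of $v$ onto $T\Si$, it follows that $\nabla_\Si\vert x-x_0\vert^2=2(x-x_0)^T$. For the Laplacian I apply the product rule in the ambient coordinates: with $y^i=x^i-x_0^i$, $\Delta_\Si\sum_i(y^i)^2=2\sum_i y^i\Delta_\Si y^i+2\sum_i\vert\nabla_\Si y^i\vert^2$. I then use the two standard identities $\Delta_\Si x=-H\mathbf{n}$ (the mean curvature vector, and $\Delta_\Si y=\Delta_\Si x$ since $x_0$ is constant) and $\sum_i\vert\nabla_\Si x^i\vert^2=\operatorname{tr}(\mathrm{proj}_{T\Si})=2$. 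This gives $\Delta_\Si\vert x-x_0\vert^2=-2H\langle x-x_0,\mathbf{n}\rangle+4$.

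\emph{Step 2 (drift term and the $\lambda$-surface equation).} From Step 1, $-\frac12\langle x,\nabla_\Si\vert x-x_0\vert^2\rangle=-\langle x,(x-x_0)^T\rangle=-\langle x,x-x_0\rangle+\langle x,\mathbf{n}\rangle\langle x-x_0,\mathbf{n}\rangle$, using $(x-x_0)^T=(x-x_0)-\langle x-x_0,\mathbf{n}\rangle\mathbf{n}$. Adding the two contributions and substituting $2H=\langle x,\mathbf{n}\rangle+2\lambda$ from the $\lambda$-surface equation, the term $-\langle x,\mathbf{n}\rangle\langle x-x_0,\mathbf{n}\rangle$ produced by $-2H\langle x-x_0,\mathbf{n}\rangle$ cancels against $+\langle x,\mathbf{n}\rangle\langle x-x_0,\mathbf{n}\rangle$ from the drift term, and what remains is exactly $-\langle x,x-x_0\rangle-2\lambda\langle\mathbf{n},x-x_0\rangle+4$.

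There is essentially no serious obstacle here; the one point that needs care is sign conventions. One must check that the convention $\Delta_\Si x=-H\mathbf{n}$ is compatible with the convention $H=\dv\,\mathbf{n}$ (equivalently $H=\langle x,\mathbf{n}\rangle/2+\lambda$) used throughout the paper, since it is precisely the consistency of these signs that forces the $\langle x,\mathbf{n}\rangle\langle x-x_0,\mathbf{n}\rangle$ terms to cancel. As a sanity check, setting $\lambda=0$ and $x_0=0$ recovers the familiar self-shrinker identity $\mc L\vert x\vert^2=4-\vert x\vert^2$ used in \cite{colding2012generic} and \cite{colding2012smooth}.
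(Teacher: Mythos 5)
Your computation is correct and follows essentially the same route as the paper: expand $\Delta_\Si\vert x-x_0\vert^2$ via $\Delta_\Si x=-H\mathbf{n}$ and $\vert\nabla_\Si x\vert^2=n$, substitute the $\lambda$-surface equation, and absorb the drift term $-\langle x,(x-x_0)^\top\rangle$; the paper merely writes the cancellation as $-\langle x,(x-x_0)^\bot\rangle-\langle x,(x-x_0)^\top\rangle=-\langle x,x-x_0\rangle$ rather than expanding $(x-x_0)^\top$ explicitly. Your sign conventions are consistent with the paper's ($H=\dv\mathbf{n}$, $\Delta_\Si x=-H\mathbf{n}$), and the sanity check at $\lambda=0$ is apt.
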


\begin{proof}\label{driftx}
We will show the result for $n$ dimensional $\lambda$-hypersurface in $\mb R^{n+1}$. We follow the proof in \cite{colding2012generic}. Note $\Delta x=-H\mathbf n$, so we have
\begin{equation}
\begin{split}
\Delta\langle x-x_0,x-x_0\rangle&=2\langle\Delta (x-x_0),x-x_0\rangle+2\vert\nabla(x-x_0)\vert^2\\
&=-2\langle H\mathbf{n},x-x_0\rangle+2n\\
&=-\langle x,\mathbf n \rangle\langle \mathbf n,x-x_0\rangle-2\lambda\langle\mathbf{n},x-x_0\rangle+2n\\
&=-\langle x,(x-x_0)^\bot\rangle-2\lambda\langle\mathbf{n},x-x_0\rangle+2n.
\end{split}
\end{equation}
This identity combined with the fact that
\[\frac{1}{2}\langle x,\nabla\vert x-x_0\vert^2\rangle=\langle x,(x-x_0)^\top\rangle\]
gives the identity.
\end{proof}

\begin{remark}\label{remarklemmaLx}
Let the point $x_{\text{min}}$ achieve the minimum of $\vert x\vert$ on $\Sigma$, then set $x_0=0$ we get
\[\vert x_{\text{min}}\vert\leq\sqrt{\lambda^2+4}-\lambda.\]
If $\Sigma$ is compact, consider the point $x_{\text{max}}$ achieve the maximum of $\vert x\vert$ on $\Sigma$, then set $x_0=0$ we get
\[\vert x_{\text{max}}\vert\geq\sqrt{\lambda^2+4}-\lambda.\]

In conclusion, any $\lambda$-surface must intersect with the sphere $S^2_{r=\sqrt{\lambda^2+4}-\lambda}$.

\end{remark}

The third lemma is a monotonicity formula for $\lambda$-surfaces.

\begin{lemma}\label{monotonicity}
Suppose $\Sigma^{n}\subset\mb R^{n+1}$ is a $\lambda$-hypersurface. Let $x_0\in\Sigma$ and $B_s=B_s(x_0)$ be the ball centered at $x_0$. Suppose $f$ is a nonnegative function satisfying $\mc L f\geq -ct^{-2}f$, then 
\begin{equation}
f(x_0)\leq C(\Lambda,c,t,x_0)\int_{B_t(x_0)\cap\Sigma}f. 
\end{equation}
\end{lemma}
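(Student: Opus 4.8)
The plan is to establish the mean-value inequality for $f$ by a Moser-type iteration adapted to the drift Laplacian, exactly as in the self-shrinker case in \cite{colding2012generic}, keeping careful track of how the $\lambda$-dependent terms enter. The starting point is the weighted divergence structure of $\mc L$: since $\mc L = e^{\vert x\vert^2/4}\dv(e^{-\vert x\vert^2/4}\nabla\cdot)$, for a test function $\phi$ with compact support in $B_t(x_0)\cap\Sigma$ we integrate $\phi^2 f^{p-1}\mc L f$ against the Gaussian weight $e^{-\vert x\vert^2/4}$. The hypothesis $\mc L f\geq -ct^{-2}f$ together with integration by parts yields, after the usual Cauchy--Schwarz absorption, a reverse-Poincar\'e estimate of the form
\begin{equation}
\int_{\Sigma} \vert\nabla(\phi f^{p/2})\vert^2 e^{-\vert x\vert^2/4}\leq C\left(p, ct^{-2}\right)\int_{\Sigma}\left(\vert\nabla\phi\vert^2+\phi^2\right)f^p e^{-\vert x\vert^2/4}.
\end{equation}

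Next I would feed this into the Michael--Simon Sobolev inequality on the hypersurface $\Sigma\subset\mb R^{n+1}$, whose error term involves $\int \vert H\vert^q$ over the relevant ball; here one uses that $\vert H\vert = \vert\frac12\langle x,\mathbf n\rangle + \lambda\vert \leq \vert x\vert + \Lambda$, which is bounded on $B_t(x_0)$ by a constant depending only on $\Lambda$ and $\vert x_0\vert$ (and $t$). This converts the reverse-Poincar\'e estimate into an $L^p \to L^{p\chi}$ improvement for $f$ with $\chi = n/(n-2)$ (or any $\chi>1$ when $n=2$, using the two-dimensional Sobolev inequality), with constants that are uniform in the family because the weight $e^{-\vert x\vert^2/4}$ is bounded above and below by positive constants on $B_t(x_0)$. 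Iterating over a nested sequence of balls shrinking to $x_0$, with cutoffs interpolating between consecutive radii and $p_k = \chi^k$, and summing the resulting geometric-type series in the exponents, produces the pointwise bound $f(x_0)\leq C\,\|f\|_{L^1(B_t(x_0)\cap\Sigma, e^{-\vert x\vert^2/4}d\mu)}$; absorbing the weight back in (bounded below on $B_t$) gives the stated inequality with $C = C(\Lambda, c, t, x_0)$.

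The one genuinely new point relative to \cite{colding2012generic} — and the step I expect to require the most care — is verifying that the constant coming from the Michael--Simon Sobolev inequality can be taken uniform over the whole family of $\lambda$-surfaces with $\vert\lambda\vert\leq\Lambda$: the $H$-error term is controlled by $\vert x\vert + \Lambda$ on $B_t(x_0)$ and hence by a constant depending only on $\Lambda$, $t$, and $\vert x_0\vert$, but one must also know an upper area bound $\area(B_s(x_0)\cap\Sigma)\leq V s^2$ on $B_t$ in order for the Sobolev error to be harmless; since $x_0\in\Sigma$, such a bound should be extracted from the monotonicity-type consequences of Lemma \ref{lemmaLx} together with the hypothesis, or simply assumed implicitly as part of the polynomial-area-growth standing assumption at the start of the section. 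A secondary subtlety is that the $-ct^{-2}f$ term on the right of the differential inequality, unlike the self-shrinker case where one applies this to $f=\vert A\vert^2$ with the favorable Simons term, is only an inequality; but this is exactly the generality Moser iteration is designed to tolerate, so it contributes only to the explicit dependence of $C$ on $c$ and $t$ and creates no structural difficulty.
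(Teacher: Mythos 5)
Your proposal is correct in outline, but it takes a genuinely different route from the paper. The paper does not use Moser iteration at all: it proves the mean value inequality by a monotonicity formula, following Section 1.3 of \cite{colding2011course}. Concretely, it integrates $f\,\mc L(\vert x-x_0\vert^2-s^2)$ against the Gaussian weight over $B_s\cap\Sigma$, uses Lemma \ref{lemmaLx} to replace $\mc L\vert x-x_0\vert^2$ by $2n$ plus drift and $\lambda$ terms (all bounded on $B_t(x_0)$ by constants depending on $\Lambda$ and $x_0$), and combines this with the coarea formula to show that $e^{(C+c/t)s}\,s^{-n}\int_{B_s\cap\Sigma}f e^{-\vert x\vert^2/4}$ is nondecreasing in $s$; letting $s\to 0$ then gives the stated bound. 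Your route via the reverse Poincar\'e inequality and the Michael--Simon Sobolev inequality would also work and is in some ways more robust, since it only needs the drift and the potential to be bounded on $B_t(x_0)$ rather than the specific identity for $\mc L\vert x-x_0\vert^2$. Two of your side remarks deserve adjustment, though: first, the Michael--Simon Sobolev inequality carries a universal constant and needs no upper area bound, so your concern about $\area(B_s(x_0)\cap\Sigma)\leq Vs^2$ is unnecessary at that step (an area bound enters nowhere in the paper's proof either); second, Moser iteration most naturally produces $\sup f\leq C\Vert f\Vert_{L^p}$ for $p>1$, and descending to the $L^1$ norm on the right-hand side requires the standard extra interpolation over nested radii (as in Theorems 8.17--8.18 of \cite{gilbarg2015elliptic}) --- routine, but it should be stated since the lemma as written is an $L^1$ bound. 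Finally, note that the paper's monotonicity approach yields strictly more than the pointwise estimate: the monotone weighted density ratio is reused in Remark \ref{remarkareabound} to propagate area bounds between scales, which the Moser route does not provide.
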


\begin{proof}
The proof is based on section 1.3 of \cite{colding2011course}. By Lemma \ref{driftx} we have
\begin{equation}
2n\int_{B_s\cap\Sigma}fe^{\frac{-\vert x\vert^2}{4}}=\int_{B_s\cap\Sigma}f\mc L(\vert x-x_0\vert^2)e^{\frac{-\vert x\vert^2}{4}}+\int_{B_s\cap \Sigma}f\langle x,x-x_0\rangle e^{\frac{-\vert x\vert^2}{4}}+\int_{B_s\cap\Sigma}2\lambda f\langle\mathbf{n},x-x_0\rangle e^{\frac{-\vert x\vert^2}{4}}.
\end{equation}

Applying integration by part to the first term on right hand side gives

\begin{equation}
\begin{split}
\int_{B_s\cap\Sigma}f\mc L(\vert x-x_0\vert^2)e^{\frac{-\vert x\vert^2}{4}}&=\int_{B_s\cap\Sigma}f\mc L(\vert x-x_0\vert^2-s^2)e^{\frac{-\vert x\vert^2}{4}}
=\int_{B_s\cap\Sigma}f\dv(e^{\frac{-\vert x\vert^2}{4}}(\vert x-x_0\vert^2-s^2))\\
&=\int_{B_s\cap\Sigma}\mc L f(\vert x-x_0\vert^2-s^2)e^{\frac{-\vert x\vert^2}{4}}+\int_{\partial B_s\cap\Sigma}f \nu\cdot e^{\frac{-\vert x\vert^2}{4}}\nabla(\vert x-x_0\vert^2-s^2))\\
&=\int_{B_s\cap\Sigma}\mc L f(\vert x-x_0\vert^2-s^2)e^{\frac{-\vert x\vert^2}{4}}+2\int_{\partial B_s\cap\Sigma}f\vert (x-x_0)^\top\vert e^{\frac{-\vert x\vert^2}{4}}.
\end{split}
\end{equation}
where $\nu$ is the boundary outer normal of $B_s\cap\Sigma$.

Using the above identity and the coarea formula gives
\begin{equation}
\begin{split}
\frac{d}{ds}(s^{-n}\int_{B_s\cap\Sigma}fe^{\frac{-\vert x\vert^2}{4}})
&=-ns^{-n-1}\int_{B_s\cap\Sigma}fe^{\frac{-\vert x\vert^2}{4}}+s^{-n}\int_{\partial B_s\cap\Sigma}f\frac{\vert x-x_0\vert}{\vert (x-x_0)^\top\vert}e^{\frac{-\vert x\vert^2}{4}}\\
&=-s^{-n-1}\frac{1}{2}\int_{B_s\cap \Sigma}f\langle x,x-x_0\rangle e^{\frac{-\vert x\vert^2}{4}}-s^{-n-1}\int_{B_s\cap\Sigma}\lambda f\langle\mathbf{n},x-x_0\rangle e^{\frac{-\vert x\vert^2}{4}}\\
&-s^{-n-1}\frac{1}{2}\int_{B_s\cap\Sigma}\mc L f(\vert x-x_0\vert^2-s^2)e^{\frac{-\vert x\vert^2}{4}}+s^{-n-1}\int_{\partial B_s\cap\Sigma}f\frac{\vert(x-x_0)^\bot\vert^2}{\vert (x-x_0)^\top\vert^2}e^{\frac{-\vert x\vert^2}{4}}.
\end{split}
\end{equation}
Now let 
\[g(s)=s^{-n}\int_{B_s\cap\Sigma}fe^{\frac{-\vert x\vert^2}{4}}.\]
The previous computation implies that 
\begin{equation}
g'(s)\geq-Cg(s)-Cg(s)-cst^{-2}g(s),
\end{equation}
where $C$ is a positive constant depending on $x_0$, $\Lambda$. Here note on $B_s\cap\Sigma$, $\vert x-x_0\vert\leq s$. Then we have
\[\frac{g'(s)}{g(s)}\geq-C-\frac{cs}{t^2}\geq-C-\frac{c}{t}.\]
So $e^{(C+\frac{c}{t})s}g(s)$ is monotone nondecreasing. Thus we get
\begin{equation}\label{monotonicity2}
f(x_0)e^{\frac{-\vert x_0\vert^2}{4}}\leq e^{Ct+c}\omega^{-1}t^{-n}\int_{B_t(x_0)\cap\Sigma}fe^{\frac{-\vert x\vert^2}{4}}.
\end{equation}
where $\omega$ is the volume of unit $n$-ball in $\mb R^{n+1}$. We simplify the terms to get
\begin{equation}
f(x_0)\leq C(\Lambda,x_0,c)e^{C(\Lambda,x_0)t}t^{-n}\int_{B_t(x_0)\cap\Sigma}f= C(\Lambda,c,t,x_0)\int_{B_t(x_0)\cap\Sigma}f.
\end{equation}
\end{proof}

\begin{remark}\label{remarkareabound}
If we pick $f=1$ in the above estimate, we will see that
\[\frac{\area(\Sigma\cap B_{s})(x_0)}{s^2}\leq C(x_0,\Lambda)e^{C(x_0,\Lambda)t}\frac{\area(\Sigma\cap B_{t}(x_0))}{t^2}\]
if $s<t$ and $t>1$. As a result, if the sequence $\{\Sigma_i\}$ in Theorem \ref{mainthm} are compact surfaces, we only need uniform diameter bound and area bound to get compactness.
\end{remark}

Now we have gathered all the ingredients to prove the curvature estimate.

\begin{proof}[Proof of Theorem \ref{CS}]
Let $\sigma_0\in(0,r]$ be chosen so that 
\[\sigma_2^2\sup_{B_{r-\sigma_0}(x_0)}\vert A\vert^2=\max_{0\leq\sigma\leq r}\sigma^2\sup_{B_{r-\sigma}(x_0)}\vert A\vert^2.\]

Let $y\in B_{r-\sigma_0}(x_0)$ be the point such that
\[\vert A\vert^2(y)=\sup_{B_{r-\sigma_0}(x_0)}\vert A\vert^2.\]
Hence
\[\sup_{B_{\sigma_0/2}(y)}\vert A\vert^2\leq 4\vert A\vert^2(y).\]

If $\sigma_0^2\vert A\vert^2(y)\leq 4$, then the inequality is true. So we may assume that
\begin{equation}\label{CS2}
\alpha:=\vert A\vert^2(y)\geq4\sigma_0^{-2}.
\end{equation}

Then by Lemma \ref{simons}, we have $\mc L\vert A\vert^2\geq -(C(\Lambda)+\alpha)\vert A\vert^2$, then applying Lemma \ref{monotonicity} with $f=\vert A\vert^2$, $t=\sigma_0/2$, $x_0=y$ and $c=\sigma_0^2/4(C(\lambda)+\alpha)$ we get
\begin{equation}
\vert A\vert^2(y)\leq C(\sigma_0,y,\Lambda,\alpha)\int_{B_{\sigma_0/2}(y)\cap\Sigma}\vert A\vert^2\leq C(\sigma_0,y,\Lambda,\alpha)\epsilon_0.
\end{equation}

Moreover, keep track of the $c$ term in (\ref{monotonicity2}) we can find a uniform bound $C(\Lambda,y)$ for any $\sigma_0\leq 1$ and $\alpha\geq 4\sigma_0^2$. i.e.

\begin{equation}
\vert A\vert^2(y)\leq C(y,\Lambda)\epsilon_0.
\end{equation}

So if $\eps_0$ is small enough, the inequality contradicts \ref{CS2}. Then we finish the proof.

\end{proof}

\begin{remark}
Choi-Schoen used a blow-up argument in the proof for convenience. In their case, minimal surface is still minimal after rescaling, but in our case, $\lambda$-surface is no longer a $\lambda$-surface after rescaling. Hence we just carefully keep track of every term in the monotonicity formula and get the same kind of contradiction.
\end{remark}

With this curvature bound, we have the following compactness theorem with singular points:

\begin{theorem}\label{convergence1}
Suppose we have a sequence of $\lambda_i$-surfaces $\Sigma_i$ with genus at most $g$ and $\partial\Sigma_i=\emptyset$ satisfying the following conditions:
\begin{enumerate}
\item $\area(B_R(x_0)\cap\Sigma)\leq C(x_0)R^2$ for all $x_0\in\mb R^3$ and all $R>0$
\item $\vert\lambda_i\vert\leq\Lambda$
\end{enumerate}
Then there is a subsequence (still denoted by $\Sigma_i$), a smooth embedded complete non-trivial $\lambda_\infty$-surface $\Sigma$ without boundary but locally finite collection of points $\mc S\subset\Sigma$ removed, so that $\Sigma_i$ converges smoothly (possibly with multiplicities) to $\Sigma$ off of $\mc S$.
\end{theorem}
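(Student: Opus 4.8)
The plan is to run the Choi--Schoen--White--Colding--Minicozzi extraction scheme, adapted to $\lambda$-surfaces with the tools established above. First I would fix an exhaustion $\mb R^3=\bigcup_R B_R$ and record a uniform local $L^2$ curvature bound: combining the local Gauss--Bonnet estimate (Theorem~\ref{GaussBonnet} and its Corollary) with the genus bound, the pointwise bound $\vert H\vert\le\vert x\vert+\Lambda$ on each $\Si_i$, and the quadratic area growth, one gets $\int_{\Si_i\cap B_R(0)}\vert A\vert^2\,d\mu\le C(R)$ independent of $i$. Let $\eps_0$ be the constant of Theorem~\ref{CS} and set
\[\mathcal S=\Big\{p\in\mb R^3:\ \liminf_{i\to\infty}\int_{\Si_i\cap B_r(p)}\vert A\vert^2\,d\mu\ge\eps_0\ \text{ for all }r>0\Big\}.\]
A disjoint-balls argument against the uniform $L^2$ bound gives $\#(\mathcal S\cap B_R)\le C(R)/\eps_0<\infty$, so $\mathcal S$ is locally finite; this is the candidate singular set.

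Next I would prove uniform $C^\infty$ bounds away from $\mathcal S$. For $p\notin\mathcal S$, after passing to a subsequence there is $r_p<\eps_0$ with $\int_{\Si_i\cap B_{r_p}(p)}\vert A\vert^2<\eps_0$, so Theorem~\ref{CS} yields $\sup_{\Si_i\cap B_{r_p/2}(p)}\vert A\vert\le C(p,\Lambda)$. Covering a compact $K\subset\mb R^3\setminus\mathcal S$ by finitely many such balls and diagonalizing gives, along a subsequence, $\sup_{\Si_i\cap K}\vert A\vert\le C(K)$. Together with the area bound this forces $\Si_i$ near each point of $K$ to be a union of a bounded number of graphs over fixed planes with uniformly bounded gradient and second fundamental form; writing $H=\tfrac12\langle x,\mathbf n\rangle+\lambda_i$ as a quasilinear elliptic equation for each graph function and differentiating, Schauder estimates bootstrap the $C^2$ bound to uniform $C^k$ bounds on compact subsets of $\mb R^3\setminus\mathcal S$, for every $k$ (keeping the position vector explicit, since the equation is not scale invariant, but controlling its coefficients uniformly on compact sets).

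Then I would extract the limit: after a further subsequence $\lambda_i\to\lambda_\infty\in[-\Lambda,\Lambda]$, Arzel\`a--Ascoli and a diagonal argument over an exhaustion of $\mb R^3\setminus\mathcal S$ produce $C^\infty_{loc}$-convergence of the graphs to a smooth $\lambda_\infty$-surface $\Si\subset\mb R^3\setminus\mathcal S$, with multiplicity bounded by the local area bound. I would check $\Si$ is closed in $\mb R^3\setminus\mathcal S$ (a limit point $q\notin\mathcal S$ has a uniform-size graph neighborhood on each $\Si_i$, hence lies on $\Si$), so $\Si$ is properly embedded and complete there, and that $\Si$ is non-trivial: Remark~\ref{remarklemmaLx} puts a point $p_i\in\Si_i$ in $B_\rho$ with $\rho=\sqrt{\Lambda^2+4}+\Lambda$, the monotonicity formula (Lemma~\ref{monotonicity} with $f\equiv1$) gives $\area(\Si_i\cap B_1(p_i))\ge c(\rho,\Lambda)>0$, and since $\mathcal S\cap B_{\rho+1}$ is finite the limit has positive area in $B_{\rho+1}$.

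The hard part will not be the extraction itself --- Theorem~\ref{CS} does the real work --- but controlling the structure of the limit without an ambient maximum principle, the new feature relative to the self-shrinker case. Distinct limiting sheets cannot cross since each $\Si_i$ is embedded, and by the strong maximum principle applied to the difference equation of Theorem~\ref{difference} with $\lambda_1=\lambda_2=\lambda_\infty$ (plus real-analyticity for unique continuation) two sheets of the \emph{same} orientation that touch must coincide; hence the only failures of embeddedness are higher multiplicity and isolated touching of \emph{opposite}-orientation sheets, both compatible with ``smooth convergence, possibly with multiplicities, off $\mathcal S$''. Making this dichotomy precise --- and, in the later subsections, shrinking $\mathcal S$ to the genuine neck-pinching points by excluding spurious curvature concentration via Theorem~\ref{nopositivesol} --- is where the argument goes beyond the minimal-surface prototype.
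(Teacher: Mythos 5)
Your proposal is correct and follows essentially the same route as the paper: a uniform local $L^2$ curvature bound from the Ilmanen Gauss--Bonnet estimate plus the genus and area hypotheses, the $\eps$-regularity of Theorem~\ref{CS} to get pointwise curvature bounds away from a locally finite concentration set, a covering/diagonal argument for smooth subconvergence, and non-triviality via Remark~\ref{remarklemmaLx}. Your version is if anything slightly more careful than the paper's at two points (the explicit lower area bound from the mean-value inequality to rule out an empty limit, and the discussion of why touching sheets must have opposite orientation), but these are refinements of, not departures from, the paper's argument.
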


\begin{proof}
We follow \cite{choi1985space} and \cite{colding2012smooth}. For any ball $B_R(x)\subset\mb R^3$, by Theorem \ref{GaussBonnet}, the total curvature of any $\Sigma_i$ over $B_R(x)\cap\Sigma_i$ is uniformly bounded by some constant $C$. For each positive integer $m$, take a finite covering $\{B_{r_m}(y_j)\}$ of $B_R(x)$ such that each point of $B_R(x)$ is covered at most $h$ times by balls in this covering, and $\{B_{r_m/2}(y_j)\}$ is still a covering of $B_R(x)$. Here we set $r_m=2^{-m}\eps_0$ and $h$ only depends on $B_R(x)$. Then we have
\[\sum_{j}\int_{\Sigma_i\cap B_{r_m}(y_j)}\vert A\vert^2\leq hC\]
Therefore for each $i$ at most $hC/\eps_0$ number of balls on which
\[\int_{\Sigma_i\cap B_{r_m}(y_j)}\vert A\vert^2\geq\eps_0\]
By passing to a subsequence of $\Sigma_i$ we can always assume that all the $\Sigma_i$ has the same balls with total curvature $\geq\eps_0$. Call the center of these balls to be $\{x_{1,m},\cdots,x_{l,m}\}$, where $l$ is an integer at most $hC/\eps_0$. Then on the balls other than $B_{x_{k,m}(r_m)}$, by Theorem \ref{CS} we have uniformly point-wise curvature bound for $\Sigma_i$. Passing to subsequence we may assume that $\Sigma_i$ converges smoothly on a half of those balls to $\Sigma$.

Then we can continue this process as $m$ increase. Finally by a diagonal argument we can get a subsequence $\{\Sigma_i\}$, converges smoothly everywhere to $\Sigma$ besides those points $x_1,\cdots,x_l$ which is the limit of those $\{x_{1,m}\},\cdots,\{x_{l,m}\}$. Moreover, since there is no maximum principle for $\lambda$-surfaces, the limit is only immersed. However if we consider the compactness for each connected components in any fixed ball, we can see the limit is self-touching.

Finally, by Lemma \ref{lemmaLx} and the remark, we know that any $\lambda$-surface must intersect with the union of spheres $S^2_{r=\sqrt{\lambda_i^2-4}-\lambda_i}(0)$. So the limit must be non-trivial.
\end{proof}

\subsection{Removable Singularities}

Next we want to show the limit $\Sigma$ in Theorem \ref{convergence1} is actually smooth. \cite{colding2012smooth}. 

First, self-shrinkers are actually minimal surfaces under some conformal change of the ambient metric, and the removable of singularities is already known in Choi-Schoen. In our case, $\lambda$-surfaces are not minimal under certain conformal change of metric. Hence we need another approach to prove the removable of singularities. Here we will follow White \cite{white1987curvature}. 

Second, self-shrinkers satisfies the maximal principle, but here for $\lambda$-surfaces we only have maximal principle in a more constraint sense. As a result, touching can not appear under the convergence of self-shrinkers, but may appear under the convergence of $\lambda$-surfaces. So we need extra assumption of the convergence to avoid touching under convergence.

So we need more delicate curvature estimate for our limit. In later discussion, we sometimes need to translate and rescale $\lambda$ surfaces, so we list some observation here. Suppose $\Sigma$ is a $\lambda$-surface, then after translation by $-z$ and rescaling by $\alpha$ ($x\to\alpha x$), the new surface $\tilde\Sigma$ satisfies the following equation 

\begin{equation}\label{rescalesurfaceequation}
H=\frac{1}{2\alpha^2}\langle x,\mathbf{n}\rangle+\frac{\lambda}{\alpha}.
\end{equation}

In particular, the right hand side of the equation goes to $0$ when $\alpha\to+\infty$.

Now we show the main curvature estimate in this section. The idea is based on \cite{white1987curvature}.

\begin{lemma}\label{curvaturenearsingularity}
Suppose $\Sigma$ is a properly self-touching $\lambda$-surface in $B_r(x_0)\setminus \{x_0\}$ with $\vert\lambda\vert\leq\Lambda$ and $r\leq R$ then there exists $\eps=\eps(\Lambda,R,x_0)>0$ such that if $\int_{\Sigma}\vert A\vert^2\leq\eps$, then there is $C$ such that
\begin{equation}
\vert A(x)\vert\vert x-x_0\vert\leq C.
\end{equation}
\end{lemma}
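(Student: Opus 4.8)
The plan is to carry out the ``White trick'': transport the scale-invariant interior curvature estimate of Theorem~\ref{CS} down to the puncture $x_0$ by applying it at base points $x\in\Sigma$ with a ball whose radius is comparable to $|x-x_0|$.

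Fix once and for all a radius $r'<r$; I will prove the bound on $B_{r'}(x_0)\setminus\{x_0\}$, which is the essential content near the singularity (the mild loss of the annulus $B_r\setminus B_{r'}$ is harmless: one may shrink $r$, and in the application to the limit surface of Theorem~\ref{convergence1} the curvature near $\partial B_r(x_0)$ is anyway bounded by the global form of Theorem~\ref{CS}). Set $\eps:=\eps_0$, the constant of Theorem~\ref{CS}, so that $\int_{\Sigma\cap B_\rho(y)}|A|^2\le\int_\Sigma|A|^2\le\eps_0$ holds for \emph{every} ball. Given $x\in\Sigma$ with $0<\delta:=|x-x_0|<r'$, put
\[
\rho:=\min\Bigl\{\tfrac{\delta}{2},\ \tfrac{r-r'}{2},\ \eps_0\Bigr\}.
\]
Then $\overline{B_{\rho}(x)}$ is compactly contained in $B_r(x_0)\setminus\{x_0\}$: it avoids $x_0$ since $\rho\le\delta/2<\delta$, and it lies in $B_{\delta+\rho}(x_0)\subset B_{(r'+r)/2}(x_0)\subset B_r(x_0)$. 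So on $B_\rho(x)$ the surface $\Sigma$ is a smooth $\lambda$-surface with no boundary of its own — which is all the purely local proof of Theorem~\ref{CS} uses.

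Applying Theorem~\ref{CS} with base point $x$ and radius $\rho\le\eps_0$, and taking $\sigma=\rho/2$ in its conclusion, gives
\[
\Bigl(\tfrac{\rho}{2}\Bigr)^{2}|A(x)|^{2}\ \le\ \max_{0\le\sigma\le\rho}\sigma^{2}\sup_{B_{\rho-\sigma}(x)}|A|^{2}\ \le\ C\bigl(\Lambda,|x|\bigr),
\]
and since $|x|\le|x_0|+r\le|x_0|+R$ and the constant in Theorem~\ref{CS} (inherited from the monotonicity Lemma~\ref{monotonicity}) depends on the base point only through an upper bound on its norm, we get $|A(x)|\le 2\sqrt{C}\,\rho^{-1}$ with $C=C(\Lambda,|x_0|,R)$. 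If the minimum defining $\rho$ is $\delta/2$, this reads $|A(x)|\,|x-x_0|\le 4\sqrt C$; otherwise $\rho$ is bounded below by a positive constant $c=c(r,r',\eps_0)$, so $|A(x)|\le 2\sqrt C/c$ and, since $\delta<r'\le R$, again $|A(x)|\,|x-x_0|\le 2r'\sqrt C/c$. Either way $|A(x)|\,|x-x_0|$ is bounded by a constant of the allowed form, which proves the lemma.

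The only step I expect to need genuine care is the transfer of Theorem~\ref{CS} — stated for a complete boundaryless $\lambda$-surface in $\mb R^3$ — to the present setting, where $\Sigma$ is merely proper in the punctured ball and only self-touching (immersed); this is precisely why the argument is ``White-type'' rather than a one-liner. One must verify that the proof of Theorem~\ref{CS}, which invokes only the Simons-type inequality Lemma~\ref{simons} and the monotonicity Lemma~\ref{monotonicity}, produces no boundary terms — and it does not, exactly because we always work in a ball $B_\rho(x)$ whose closure misses both $x_0$ and $\partial B_r(x_0)$, which ``proper'' guarantees. The self-touching nature is harmless: both ingredients are valid for immersions, $\int_\Sigma|A|^2$ is read off the abstract surface, and near a touching point one applies the estimate to each of the two graphical sheets separately.
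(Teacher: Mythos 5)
Your proof is correct in substance but follows a genuinely different route from the paper. The paper argues by contradiction with a point-picking blow-up: it selects points $z_n$ nearly maximizing $\vert A\vert^2(\vert x-x_0\vert-\tfrac1n)$, rescales by $\vert A(z_n)\vert$, observes that the rescaled equation \eqref{rescalesurfaceequation} degenerates to the minimal surface equation, and obtains in the limit a complete minimal surface with total curvature at most $\eps$ and $\vert\tilde A(0)\vert=1$ --- impossible since small total curvature forces a plane. You instead transport Theorem \ref{CS} directly to balls $B_{\rho}(x)$ with $\rho\sim\vert x-x_0\vert$, which is the more quantitative version of White's trick: it yields the bound with explicit constants and no compactness or classification-of-blow-ups step, at the price of having to check (as you do) that the proof of Theorem \ref{CS} is purely interior --- i.e.\ that Lemma \ref{simons} and the integration by parts in Lemma \ref{monotonicity} produce no boundary terms when $\Sigma$ is merely proper and boundaryless in $\overline{B_\rho(x)}$, and that the constants $C$ and $\eps_0$ there depend on the base point only through an upper bound on its norm. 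Your restriction to $B_{r'}(x_0)$ with $r'<r$ is not a real loss: the estimate cannot hold uniformly up to the outer sphere $\partial B_r(x_0)$ for a surface only assumed proper in the open punctured ball, and the paper's own proof has the same implicit interior restriction (its maximization of $\vert A\vert^2(\vert x-x_0\vert-\tfrac1n)$ over $(B_R(x_0)\setminus B_{1/n}(x_0))\cap\Sigma$ likewise presupposes the supremum is attained away from the outer boundary); in the application in Theorem \ref{limit} only the behavior near the puncture is used. The two arguments are thus both valid, with yours being more elementary given Theorem \ref{CS}, and the paper's being closer in spirit to \cite{white1987curvature} and more robust if one did not already have a sharp interior $\eps$-regularity statement in hand.
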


\begin{proof}
We show by contradiction. If the criterion is not true, then we can find a sequence of points $x_n\in((B_{R}(x_0)\setminus B_{1/n}(x_0))\cap\Sigma)$ such that
\[\vert A(x_n)\vert^2(\vert x_n-x_0\vert-\frac{1}{n})\to+\infty.\]
 Otherwise we will have uniform bound for $\vert A(x)\vert^2(\vert x-x_0\vert-\frac{1}{n})$ for a sequence of $n\to\infty$, then passing to limit we will have a uniform bound for $\vert A(x)\vert^2\vert x-x_0\vert$.

Then we can choose $z_n\in ((B_{R}(x_0)\setminus B_{1/n}(x_0))\cap\Sigma)$ such that $\vert A(z_n)\vert^2(\vert z_n-x_0\vert-\frac{1}{n})$ achieve maximum. Note that $\vert A(x)\vert^2(\vert x-x_0\vert-\frac{1}{n})$ achieve $0$ on $\partial B_{1/n}(x_0)\cap\Sigma$, so $d_n:=\vert z_n-x_0\vert-\frac{1}{n}>0$.

Now we translate $\{x\in\Sigma:\vert x-z_n\vert\leq d_n/2\}$ in $\mb R^3$ such that $z_n$ becomes $0$ the origin, and then rescale this part of the surface with scale $\vert A(z_n)\vert$. We denote this new surface by $\tilde\Sigma_n$, and use tilde to denote the quantities on this new surface.

Note $\tilde\Sigma_n$ satisfy the following properties. First, $\vert \tilde A(0)\vert=1$; Second, by 
\[\vert A(z_n)\vert^2d_n\to+\infty\]
we know that for any fixed $R>0$, $\tilde\Sigma_n\cap \partial B_{R}(0)\not=\emptyset$ if $n$ large enough, and $\partial\tilde\Sigma_n\cap B_R(0)=\emptyset$ if $n$ large enough; Finally, for any $x'=\vert A(z)\vert x\in\tilde\Sigma_n$, we have

\[\vert A(x)\vert(\vert x-x_0\vert-\frac{1}{n})\leq \vert A(z)\vert d_n.\]

Since $\vert x-z\vert\leq d_n/2$, we have $\vert x-x_0\vert-\frac{1}{n}\geq d_n/2$, thus $\vert A(x)\vert\leq 2\vert A(z)\vert$, thus $\vert \tilde A(x')\vert\leq 2$.
 
By the uniform curvature bound for $\tilde\Sigma_n$, for each $R>0$, there exists a subsequence (still denoted by $\tilde\Sigma_n$) converging smoothly on $B_R(0)$ to a complete surface $\tilde\Sigma$. Checking the equation of translation and rescaling, we see that the limit $\tilde\Sigma$ must be a minimal surface, i.e. $\tilde H=0$.

Since the rescaling would not change the integral of the squared curvature, we have

\[\int_{B_R(0)\cap\tilde\Sigma}\vert A\vert^2\leq\eps.\]

Thus $\tilde\Sigma$ has to be the plane. Which contradicts to the condition that $\vert \tilde A(0)\vert=1$.

\end{proof}

\begin{theorem}\label{limit}
The limit surface in Theorem \ref{convergence1} is smoothly immersed. Moreover, for $y\in \mc S$ be a non-embedded point, in a small neighbourhood of $y$, $\Sigma$ is a union of two disks which are touching at $y$.
\end{theorem}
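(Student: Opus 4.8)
The plan is to remove each point of $\mc S$ by adapting the removable-singularity argument of White \cite{white1987curvature} for almost-minimal surfaces, with Lemma \ref{curvaturenearsingularity} providing the crucial curvature input and \eqref{rescalesurfaceequation} ensuring that the $\lambda$-surface equation degenerates to $H=0$ under blow-up. Fix $y\in\mc S$; by local finiteness of $\mc S$ choose $r_0>0$ with $B_{r_0}(y)\cap\mc S=\{y\}$, so that $\Sigma$ is a smooth, properly immersed, self-touching $\lambda_\infty$-surface on $B_{r_0}(y)\setminus\{y\}$. Applying Theorem \ref{GaussBonnet} on annuli exhausting $B_{r_0}(y)\setminus\{y\}$ --- using $|H|\le|x|+\Lambda$ there, the genus bound $g$, and the quadratic area growth of Remark \ref{remarkareabound} --- gives $\int_{\Sigma\cap B_{r_0}(y)\setminus\{y\}}|A|^2<\infty$. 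Since $\area(\Sigma\cap B_r(y))\le C(y)r^2\to0$ as $r\to0$, the surface measure $\mu$, and hence the finite measure $|A|^2\,d\mu$, has no atom at $y$; therefore $\int_{\Sigma\cap B_r(y)\setminus\{y\}}|A|^2\to0$, and I may pick $r\le r_0$ below the threshold $\eps$ of Lemma \ref{curvaturenearsingularity}. That lemma then yields the scale-invariant bound $|A(x)|\,|x-y|\le C$ on $\Sigma\cap(B_r(y)\setminus\{y\})$.

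I would then blow up at $y$: set $\Sigma^\rho:=\rho^{-1}(\Sigma-y)$ and let $\rho\to0$. By \eqref{rescalesurfaceequation} each $\Sigma^\rho$ satisfies a $\lambda$-surface type equation whose right-hand side is $O(\rho)$, and it carries the bounds $|A|\,|x|\le C$, quadratic area growth, and total curvature $\le\eps$ on each annulus. Using a curvature estimate in the spirit of Theorem \ref{CS}, a subsequence converges smoothly away from $0$ to a complete surface $\Sigma^\infty\subset\mb R^3\setminus\{0\}$ that is \emph{minimal} (the limiting right-hand side vanishes), self-touching, and satisfies $|A|\,|x|\le C$ with finite area density at $0$. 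The exact ($\lambda=0$) monotonicity formula behind Lemma \ref{monotonicity}, combined with $|A|\,|x|\le C$, forces $\Sigma^\infty$ to be a cone; a minimal cone in $\mb R^3$ smooth away from $0$ is a union of planes through $0$, and tangential self-touching forbids two distinct ones, so $\Sigma^\infty$ is a single plane $P$ with finite multiplicity $k\ge1$. Using $|A|\,|x|\le C$ once more to prevent $P$ from rotating as $\rho\to0$, together with connectedness of the annular pieces, one checks that every blow-up limit is this same $P$ with the same $k$.

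It then follows that, for $r$ small, $\Sigma\cap(B_r(y)\setminus\{y\})$ is a union of $C^{1,\alpha}$ sheets over a punctured disk $D^*\subset P$, each $C^1$-close to $P$; by the self-touching ordering these are single-valued graphs $u_1\ge u_2\ge\cdots\ge u_k$ on $D^*$ with $|u_j|\le C|x|$ and $|\nabla u_j|\le C$. Each $u_j$ solves the quasilinear elliptic graph equation of a $\lambda_\infty$-surface; since $u_j\in W^{1,\infty}$ and the singular set is a single point, the standard removable-singularity bootstrap (difference quotients plus De Giorgi--Nash--Moser, then Schauder iteration, as in the almost-minimal case of \cite{white1987curvature}) extends each $u_j$ smoothly across $0$. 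Hence $\Sigma$ extends across $y$ as $k$ smooth disks tangent to $P$ at $y$, ordered $u_1\ge\cdots\ge u_k$; carrying this out at every point of $\mc S$ yields that $\Sigma$ is smoothly immersed. If $y$ is non-embedded then $k\ge2$, and I expect $k=2$ to follow from embeddedness of the $\Sigma_i$: three or more sheets collapsing onto $y$ would, on inspecting consecutive pairs of sheets of $\Sigma_i$ near $y$, force either an extra handle (against the genus bound $g$) or a self-intersection of some $\Sigma_i$.

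The main obstacle is the blow-up step: since $\lambda$-surfaces are neither scale invariant nor governed by a maximum principle, the minimal-surface removable-singularity theory cannot be quoted directly. The two facts that make it go through are already in hand --- Lemma \ref{curvaturenearsingularity} supplies the scale-invariant curvature bound and \eqref{rescalesurfaceequation} kills the lower-order terms under blow-up --- so the real work is verifying that the a priori possibly non-unique tangent objects at $y$ are genuine multiplicity-$k$ planes and that the almost-monotonicity of area at small scales survives the $\lambda$-perturbation; the subsequent graphical bootstrap in the last step is then routine.
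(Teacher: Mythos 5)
Your outline matches the paper's strategy (local Gauss--Bonnet to make the total curvature at the puncture small, Lemma \ref{curvaturenearsingularity} for the scale-invariant bound $|A(x)|\,|x-y|\le C$, blow-up to a union of planes via \eqref{rescalesurfaceequation}, then graphical extension), but there is a genuine gap at the decisive step: uniqueness of the tangent plane at $y$. You dispose of it with ``using $|A|\,|x|\le C$ once more to prevent $P$ from rotating as $\rho\to0$,'' and that inference is false. A bound $|A|\,|x|\le C$ only controls the turning of the unit normal by $O(C)$ on each dyadic annulus $B_{2^{-k}}(y)\setminus B_{2^{-k-1}}(y)$; the total turning over infinitely many scales is then unbounded, so distinct subsequential blow-ups $\Sigma^{\rho_j}$ may converge to distinct planes (this is exactly the logarithmic-spiralling obstruction that makes removable-singularity theorems nontrivial). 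Nothing in your write-up rules this out, and without it the surface need not be $C^1$ across $y$, so the Lipschitz-graph bootstrap in your last step never gets off the ground. You flag this as ``the real work'' in your closing paragraph, but you do not supply the argument, and the mechanism you do name cannot supply it. The paper's fix, following White, is a barrier construction: write the rescaled surfaces as graphs $\varphi_i$ over the candidate plane solving $\Delta\varphi_i=\dv(a_i\cdot\nabla\varphi_i)+b_i\cdot\nabla\varphi_i+c_i\varphi_i+d$ with coefficients tending to $0$, use the implicit function theorem to solve the same equation with boundary data $\varphi_i+t$ and obtain a foliation of a neighbourhood by leaves with a fixed choice of normal, and then apply the maximum principle to conclude that the surface lies on one side of the leaf through the puncture; every dilation limit is then the tangent plane of that single leaf. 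Some such argument (or an Allard/curvature-decay substitute) is indispensable here.

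Two smaller points. First, your derivation that the blow-up is a union of planes takes a detour through minimal cones and an ``almost-monotonicity'' you admit is unverified; the paper gets flatness directly from $\int_{\Sigma^{\rho}\cap K}|A|^2=\int_{\Sigma\cap\rho K}|A|^2\to0$, which you already have in hand and which avoids the cone analysis entirely. Second, for the claim that a non-embedded point carries exactly two disks, your genus/self-intersection heuristic (``I expect $k=2$ to follow\ldots'') is not an argument; the clean reason is the strong maximum principle applied to consecutive ordered sheets: two touching sheets satisfying the same equation with the same orientation of the normal must coincide, so consecutive sheets alternate orientation, and three or more sheets would force the top and bottom of some triple to touch with equal orientation --- a contradiction. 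This is how the paper concludes, and it is worth making explicit since it is also what justifies the self-touching structure asserted in the theorem.
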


\begin{proof}
By appropriate translation of $\Sigma$ suppose $x_0\in\mc S$ and $r>0$ such that $0$ is the only element in $\mc S$ in $B_r(0)$. Since $\int_{\Sigma_i\cap B_r(0)}\vert A\vert^2$ has uniform bound by generalized Gauss-Bonnet theorem \ref{GaussBonnet}, we may assume $r$ small enough such that $\int_{B_r(0)\cap\Sigma}\vert A\vert^2\leq\eps$.

By Lemma \ref{curvaturenearsingularity}, there is a constant $C$ such that
\[\vert A(x)\vert\vert x\vert\leq C.\]
for any $x\in B_r(0)\cap\Sigma$. Now we choose a sequence $r_i\to 0$ and rescale $\Sigma$ by $1/r_i$ and denote it by $\tilde\Sigma_i$. Note the curvature bound
\[\vert A(x)\vert\vert x\vert\leq C\]
is invariant under rescaling, so this uniform curvature bound indicate that $\tilde\Sigma_i$ smoothly converges to a complete surface $\tilde\Sigma$ in $\mb R^3\setminus\{0\}$. See \cite{white1987curvature}.

Now for $K$ be any compact subset of $\mb R^3\setminus\{0\}$, 
\[\int_{\tilde\Sigma_i\cap K}\vert A\vert^2=\int_{\Sigma_i\cap r_iK}\vert A\vert^2\to0\text{ as $r_i\to0$}.\]
This implies $\tilde\Sigma$ is a union of planes. Thus $\Sigma\cap B_r(0)$ is actually a union of disks and punctured disks (see\cite{white1987curvature}).

Now let $\Sigma$ just denote one of its connected components which is a punctured disk. Since $\tilde\Sigma_i$ converges to to the plane in $\mb R^3\setminus\{0\}$, we can assume for some $i$, $\tilde\Sigma_i$ can be written as a graph $\varphi_i$ of that plane. Without lost of generality, let the plane be the $xy$ plane in $\mb R^3$. By the computations in Theorem \ref{difference} and the rescaled equation (\ref{rescalesurfaceequation}), in $B_1$, $\tilde\Sigma_i$ satisfies a elliptic equation
\begin{equation}
\Delta\varphi_i=\dv(a_i\cdot\nabla\varphi_i)+b_i\cdot\nabla\varphi_i+c_i\varphi_i+d.
\end{equation}
Here every terms are defined on $\mb R^2\cap B_1(0)$. Again, when $i$ large, each terms on the right hand side goes to $0$. Then by implicit function theorem, if we fixed the normal direction to point upwards, we can solve $\varphi_{i,t}$ for boundary date $\varphi_{i,t}=\varphi_i+t$ on $\partial B_{1}(0)$. Then the graphs of $\varphi_{i,t}$ foliate a region of $B_1(0)\times\mb R^2$. Since we fixed the direction of normal vectors, we can apply maximal principle, which indicates that the leaf such that $\varphi_{i,t}(0)=0$ lies on one side of $\tilde\Sigma_i$. As a result, any sequence of dilations of $\Sigma$ must converge to the same limiting plane, which is just the tangent plane of that leaf at $0$.

Thus $\Sigma\cup\{0\}$ is a $C^1$ graph of a function $v$ in a neighbourhood of $0$. Since $v$ is a $C^{2,\alpha}$ solution to an elliptic equation except $0$, then $v$ is actually $C^{2,\alpha}$. Hence $\Sigma\cup\{0\}$ is a smooth disk.

We have already shown that $\Sigma\cup\{0\}$ is a union of smooth disks. So $\Sigma$ is an immersed surface, with locally finite many curvature concentration points. By maximal principle, at each touching point $\Sigma$ consists of two disks which are touching at that point. So $\Sigma$ is smoothly self-touching immersed.
\end{proof}

\begin{remark}
Suppose $y\in\mc S$, then from the proof we know in a small neighbourhood of $y$, $\Sigma$ is the union of two disks which are touching at $y$. By maximum principle, on each disks the normal vectors are pointing in opposite direction.
\end{remark}

\subsection{Smooth Convergence Besides Touching Points}
In this section we will follow \cite{colding2012smooth} to show the convergence is smooth besides touching points if the limit is not a self-shrinker. Note $\lambda$-surface has bounded mean curvature inside any fixed ball $B_R(x)$. Thus we can apply Allard's theorem in \cite{allard1972first} (also see \cite{choi1985space}, \cite{white1987curvature}, \cite{colding2012smooth}), and we only need to show the multiplicity must be one. 

From now on we will adapt one of the condition in previous section and assume the limit is smooth.

\begin{theorem}\label{multiplicity1}
The multiplicity of the convergence in Theorem \ref{convergence1} is one, if the limit is not a self-shrinker. 
\end{theorem}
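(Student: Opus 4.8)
The plan is to argue by contradiction in the style of Colding--Minicozzi \cite{colding2012smooth}: assume the convergence $\Sigma_i \to \Sigma$ has multiplicity $m \geq 2$ on some fixed ball away from the singular set $\mc S$, and use this to manufacture a positive solution of $Lu = 0$ on $\Sigma$, contradicting Theorem \ref{nopositivesol}. Since $\Sigma$ is smooth (by the standing assumption) and the convergence is smooth off $\mc S$ with bounded curvature, inside a small ball $B$ centered at an embedded point of $\Sigma$, for large $i$ the surface $\Sigma_i$ is a union of $m$ graphs $u_i^1 < u_i^2 < \cdots < u_i^m$ over $\Sigma$, each $u_i^k \to 0$ in $C^2$. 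First I would exploit the key observation (already emphasized in the introduction and made precise in Theorem \ref{difference}): each difference $w_i^{k} := u_i^{k+1} - u_i^{k}$ is positive, and subtracting the two elliptic equations satisfied by the graphs $u_i^k, u_i^{k+1}$ --- both of which have the \emph{same} linearized operator $L$ and whose $\lambda$-dependence enters only through the constant $\lambda_i - \lambda$ which cancels --- we find that $w_i^k$ satisfies a homogeneous linear elliptic equation
\[
L w_i^k = \dv(\hat a_i \cdot \nabla w_i^k) + \hat b_i \cdot \nabla w_i^k + \hat c_i w_i^k,
\]
where, by Theorem \ref{difference} and Remark \ref{remarklinearlizedequation}, the coefficients $\hat a_i, \hat b_i, \hat c_i$ uniformly converge to $0$ (here it is crucial that the inhomogeneous term $\lambda_i - \lambda$ is identical for consecutive sheets and hence drops out of the difference).

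Next I would normalize: fix an interior point $p \in B$ and set $v_i^k := w_i^k / w_i^k(p) > 0$, so $v_i^k(p) = 1$. By the Harnack inequality for the (uniformly) elliptic operator $L - \dv(\hat a_i \cdot \nabla) - \hat b_i\cdot\nabla - \hat c_i$ --- whose coefficients are uniformly bounded, in fact converging to those of $L$ --- the $v_i^k$ are locally uniformly bounded above and below on compact subsets of $B$, hence by Schauder estimates precompact in $C^{2,\alpha}_{loc}$. Passing to a subsequence, $v_i^k \to v^k$ with $v^k \geq 0$, $v^k(p) = 1$, and, since the coefficients $\hat a_i, \hat b_i, \hat c_i \to 0$, the limit satisfies $L v^k = 0$ on $B$. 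By the strong maximum principle (for $L$, using $v^k(p) = 1 > 0$), $v^k > 0$ throughout $B$.

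The remaining issue is that so far $v^k$ is defined only on the small ball $B$, not on all of $\Sigma$; this globalization is where I expect the real work to lie. To promote $v := v^1$ to a global positive solution on $\Sigma$, I would run the same graph-difference construction on an exhausting family of balls covering $\Sigma \setminus \mc S$, keeping a \emph{fixed} normalization point $p$, and use the smooth-convergence-with-multiplicity-$m$ structure together with a chaining Harnack argument (the limit surface is connected, so any two points are joined by a chain of balls in which $\Sigma_i$ is graphical) to extract, via a diagonal subsequence, a single limit function defined on $\Sigma \setminus \mc S$ that is positive and solves $Lu = 0$ there; one then checks the singular points of $\mc S$ are removable for this solution using the curvature bound $|A||x - x_0| \leq C$ near $\mc S$ from Lemma \ref{curvaturenearsingularity} and the fact that $\mc S$ has codimension two, so $u$ extends to a positive solution on all of $\Sigma$. (If the limit is a self-shrinker, consecutive sheets may have opposite orientation, which is exactly why the theorem excludes that case; but when $m \geq 2$ and $\lambda \neq 0$ --- or more generally when two of the $m$ sheets share orientation, which is automatic once $m \geq 2$ in the non-shrinker case --- the difference of two like-oriented graphs still gives the homogeneous equation with $\lambda_i - \lambda$ cancelling, and the argument goes through; this orientation bookkeeping is the one subtlety to state carefully.) Having produced a positive $u$ with $Lu = 0$ on $\Sigma$, Theorem \ref{nopositivesol} gives a contradiction, so $m = 1$, i.e. the convergence is of multiplicity one. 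The main obstacle, concretely, is the passage from the local positive Jacobi field on $B$ to a \emph{global} one on $\Sigma$: ensuring the normalizations are compatible across overlapping balls and that the limit does not degenerate to zero somewhere, which is handled by the connectedness of $\Sigma$ and the uniform Harnack constants coming from the uniform convergence of the coefficients.
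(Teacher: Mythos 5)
Your overall strategy is exactly the paper's: assume multiplicity at least two, order the sheets by height, take the difference of two like-oriented sheets so that the inhomogeneous term $\lambda_i-\lambda$ cancels and Theorem \ref{difference} gives a homogeneous equation whose principal part is $L$ with errors tending to zero, normalize at a fixed point, run Harnack plus Schauder plus Arzel\`a--Ascoli to extract a nonnegative solution of $Lu=0$ on $\Sigma\setminus\mc S$, and contradict Theorem \ref{nopositivesol}. (The paper uses the top-minus-bottom difference $w_i=w_i^+-w_i^-$ rather than consecutive sheets; this is immaterial.)

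The one step where your sketch does not go through as written is the extension of $u$ across $\mc S$. You assert removability ``using the curvature bound $|A||x-x_0|\leq C$ and the fact that $\mc S$ has codimension two.'' In two dimensions, codimension-two removability fails for merely \emph{positive} solutions: $-\log|z|$ is a positive harmonic function on a punctured disk that does not extend. What is actually needed --- and what the paper proves --- is a uniform \emph{bound} on $u_i$ near each $y_k\in\mc S$ in terms of its supremum on the annulus $B_\delta(y_k)\setminus B_{\delta/2}(y_k)$. The paper obtains this with a barrier argument: it solves the Dirichlet problem for the perturbed operator with constant boundary data $t$ on $\partial B_\delta(y_k)$ (possible by the implicit function theorem since the error coefficients are small), uses Harnack to get $t/C\leq v_t\leq Ct$, and then compares $u_i$ with these barriers via the maximum principle. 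Only after this boundedness is established does the standard removable-singularity theorem apply, and then Harnack upgrades $u\geq0$, $u(y)=1$ to $u>0$ everywhere. Your orientation bookkeeping (all sheets share an orientation when $\lambda\neq0$, which is why the self-shrinker case is excluded) is correct and matches the paper's discussion.
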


\begin{proof}
Following Proposition 3.2 in \cite{colding2012smooth}, we will show that if the multiplicity is greater than one, then on $\Sigma$ there is a positive function $u$ such that $\mc L u=0$.

Since the convergence is smooth away from $\mc S$, we can choose $\eps_i\to 0$ and domain $\Omega_i\subset\Sigma$ exhausting $\Sigma\setminus\mc S$ so that each $\Sigma_i$ decomposes locally as a collection of graphs over $\Omega_i$, and is contained in the $\eps_i$-neighbourhood of $\Sigma$. Since $\Sigma_i$ is embedded (hence orientable), these sheets over $\Omega_i$ can be ordered by height. Let $w_i^+$ and $w_i^-$ be the functions representing the top sheet and the bottom sheet over $\Omega_i$, define $w_i=w_i^+-w_i^-$. By Theorem \ref{difference}
\[Lw_i=0\]
up to higher order correction terms.

Fixed $y\not\in\mc S$ and set $u_i=\frac{w_i}{w_i(y)}$. By embeddedness of $\Sigma_i$, $u_i$ is positive. Then Harnack inequality implies local $C^\alpha$ bound, and elliptic theory gives $C^{2,\alpha}$ bound. See \cite{colding2012smooth} and \cite{gilbarg2015elliptic} for the details of elliptic PDE. Then by Areazela-Ascoli theorem, a subsequence of $u_i$ converges uniformly in $C^2$ on a compact subset of $\Sigma\setminus\mc S$ to a non-negative function $u$ on $\Sigma\setminus\mc S$ such that
\[Lu=0, u(y)=1.\]

Next we show $u$ can extends smoothly across $\mc S$. This follows the standard removable singularity results for elliptic equations once we show that $u$ is bounded up to each $y_k$. Again we follow the idea of \cite{white1987curvature} and \cite{colding2012smooth} in Theorem \ref{limit}. Suppose $y_k\in\mc S$, from Theorem \ref{limit}, in a small neighbourhood of $y_k$ $\Sigma$ is union of at most two disks. Let $\tilde\Sigma$ be one of them. Suppose $w_i^+$ satisfies the elliptic equation
\[Lw_i^+=\dv(a_i\nabla w_i^+)+b_i\cdot\nabla w_i^++c_iw_i^++d\]
Then when $i$ large enough, by implicit function theorem, over the tangent plane of $\tilde\Sigma$, let's say $\mb R^2$, we can solve equation
\[Lv_t=\dv(a_i\nabla v_t)+b_i\cdot\nabla v_t+c_iv_t\]
with boundary value $v_t=t$ on $\partial B_\delta(y_k)$ for a fixed small $\delta$. Harnack inequality implies that $t/C\leq v_t\leq Ct$ for some $C>0$. Then by maximum principle $u_i$ is bounded by $Ct_1$ and $t_2/C$ if the boundary data of $u_i$ satisfies $t_2\leq u_i\leq t_1$. In conclusion, $u_i$ is bounded on $B_\delta(y_k)$ by a multiple of its supremum on $ B_\delta(y_k)\setminus B_{\delta/2}(y_k)$. This multiple is uniform because $u_i$ satisfies the same equation up to higher order. Then we conclude that $u$ is bounded in $B_{\delta}(y_k)$, hence $u$ has a removable singularity at each $y_k$, and thus extends to a non-negative solution of $Lu=0$ on all of $\Sigma$. Since $u(y)=1$, Harnack inequality implies that $u$ is positive everywhere.

However, this contradicts Theorem \ref{nopositivesol}. So the convergence can only has multiplicity one. Then we can extend the smoothly convergence across those singular points in $\mc S$ which are not touching points, i.e. has density $1$.
\end{proof}

So we extends our smooth convergence to whole $\Sigma$ besides local finite touching points with curvature concentration. The last step is to validate the terminology neck pinching we used in the main theorem. We claim that these touching points are really generated via a neck pinching process.

\begin{definition}
We say a touching point $p$ on $\Sigma$ is a {\bf neck pinching} point if for $0<r<r_0$, there are at most finitely many $\Sigma_i$ such that $\Sigma_i\cap B_r$ can be written as graphs over each connected disks of $\Sigma\cap B_r$.
\end{definition}

Intuitively, a neck pinching point is generated by splitting a connected neck into two different connected components.

\begin{proposition}
If $p\in\mc S$ is not a neck pinching point, then smooth convergence can be extended across $p$. 
\end{proposition}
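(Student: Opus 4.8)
The plan is to show that near a non-neck-pinching touching point $p$, the topology of $\Sigma_i$ matches that of $\Sigma$ locally, so the sheets over each of the two disks can be followed consistently and the convergence upgraded to smooth convergence with multiplicity one across $p$. By the definition of neck pinching, since $p$ is \emph{not} a neck pinching point, for all small $r$ and all but finitely many $i$, $\Sigma_i\cap B_r$ decomposes as graphs over each of the two disks of $\Sigma\cap B_r$ that touch at $p$. So after discarding finitely many terms, we may assume that for every small $r$ this decomposition holds: write $\Sigma_i\cap B_r = G_i^1 \cup G_i^2$ where $G_i^j$ is a graph $u_i^j\mathbf n$ over the $j$-th disk $D^j$ of $\Sigma$.

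First I would invoke Theorem \ref{limit} and its Remark: near $p$, $\Sigma$ is the union of two smooth disks $D^1, D^2$ touching at $p$, with normals pointing in opposite directions. Since the convergence off $\mc S$ is already smooth with multiplicity one by Theorem \ref{multiplicity1}, each $u_i^j$ converges in $C^k_{loc}$ on $D^j\setminus\{p\}$ to $0$. The remaining issue is exactly the point $p$ itself: a priori the curvature of $\Sigma_i$ could blow up there even though topologically nothing is happening. To rule this out I would redo the Choi--Schoen--White argument of Lemma \ref{curvaturenearsingularity} and Theorem \ref{limit}, but now applied directly to the sequence $\Sigma_i$ rather than to a single limit surface: the point is that since $\Sigma_i\cap B_r = G_i^1\cup G_i^2$ with $G_i^j$ a graph over the fixed disk $D^j$, each connected component $G_i^j$ is a \emph{disk} (not an annulus or anything with a neck), so the renormalized-curvature blow-up argument produces, in the limit, an entire minimal graph with small total curvature, which must be a plane — contradicting a curvature blow-up. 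Concretely, if $\sup_{B_{r/2}}|A_{\Sigma_i}|\to\infty$, choose points $y_i$ realizing (after the usual $\sigma^2\sup|A|^2$ maximization as in the proof of Theorem \ref{CS}) the blow-up, translate $y_i$ to the origin, rescale by $|A(y_i)|$; the rescaled surfaces have $|\tilde A(0)|=1$, bounded curvature, bounded total curvature $\le\eps$, and — crucially, because each stays a graph over $D^j$ — are (for the component through $y_i$) simply connected in larger and larger balls. Passing to a smooth limit gives a complete minimal surface (by the rescaled equation \eqref{rescalesurfaceequation}) with $\int|A|^2\le\eps$ and $|\tilde A(0)|=1$, forcing it to be a plane, a contradiction.

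Having established a uniform bound $|A_{\Sigma_i}(x)|\le C$ on $B_{r/2}$ (independent of $i$), I would then run the argument of Theorem \ref{limit} and Theorem \ref{multiplicity1} verbatim: the graph functions $u_i^j$ satisfy the elliptic equation of Theorem \ref{difference} / equation \eqref{rescalesurfaceequation} with coefficients tending to $0$, they are bounded in $C^{2,\alpha}$ up to and including $p$ by Schauder estimates (no singular point to excise now), hence converge in $C^2$ on all of $D^j$, and the limit is $0$ since it already is $0$ on $D^j\setminus\{p\}$. This gives $C^2$ — and then by bootstrapping $C^k$ — convergence of $\Sigma_i$ to $\Sigma$ on $B_{r/2}$, i.e. smooth convergence extends across $p$. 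The multiplicity-one statement is automatic because each $D^j$ is covered by exactly one graph $G_i^j$.

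The main obstacle is the curvature bound step: one must leverage the topological hypothesis (``graph over each disk'', i.e. no neck) to kill the possibility that the rescaled blow-up limit is a nonplanar minimal surface such as a catenoid or a higher-genus example. This is precisely where ``not a neck pinching point'' is used — it guarantees the relevant components of $\Sigma_i$ are disks, so small total curvature plus minimality genuinely forces a plane, whereas at a genuine neck-pinch the rescaled limit could be (a piece of) a catenoid and the argument would fail. Everything after the curvature bound is a routine repetition of the elliptic-estimates-plus-removable-singularity machinery already developed in Theorems \ref{limit} and \ref{multiplicity1}.
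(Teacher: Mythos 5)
Your proposal is correct and follows essentially the same route as the paper, whose proof is just the observation that the definition of a non-neck-pinching point lets one write $\Sigma_i\cap B_r$ as graphs over each of the two disks of $\Sigma\cap B_r$ and then repeat the analysis of Theorems \ref{limit} and \ref{multiplicity1} component by component; you have simply filled in the details. One small inaccuracy: at a curvature-concentration point $p\in\mc S$ the approximating surfaces satisfy $\int_{\Sigma_i\cap B_r(p)}\vert A\vert^2\geq\eps_0$ at every scale (that is precisely why $p$ is singular), so in your blow-up you cannot assume the rescaled total curvature is $\leq\eps$; however, your parallel observation that each component is an embedded disk --- so the blow-up limit is a complete, simply connected, embedded minimal surface of finite (Gauss--Bonnet-bounded) total curvature, hence a plane --- does carry the argument.
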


\begin{proof}
If $p$ is not a neck pinching point, by definition locally we can write $\Sigma_i$ as graphs over each connected disks of $\Sigma\cap B_r$. Then we study the convergence of each such graphs, then we can argue the smooth convergence for each components just like non-touching points.
\end{proof}

Finally we discuss the situation that the limit is a self-shrinker. If the limit is a self-shrinker, and the convergence is of multiplicity larger than $2$, then we can find at least two sheets has the same orientation. Then we repeat the above discussion but just for the sheets with this orientation, we can construct a positive Jacobi field, which is a contradiction. Thus the multiplicity of the convergence is at most $2$.

Now we can conclude our main compactness theorem.

\begin{proof}[Proof of Theorem \ref{mainthm}]
Theorem \ref{convergence1} gives the convergence subsequence, Theorem \ref{limit} shows the limit is smooth and embedded besides touching points, and Theorem \ref{multiplicity1} shows the convergence is smooth besides those neck pinching points if the limit is not a self-shrinker. 
\end{proof}

\section{Rigidity of convex $\lambda$-Surfaces}
In this section we will prove a rigidity theorem. The main idea is that we can use the smoothly convergence $\lambda$-surfaces sequence to generate a solution to the linearized equation on the limit surface. Then the property of the solution would provide information of the surfaces which are closed to the limit.

\begin{theorem}\label{convexrigidity}
There exists $\delta_D<0$ such that for any $\lambda\in(\delta_D,+\infty)$, compact convex $\lambda$-surface with bounded diameter $D$ must be sphere $S^2_{r}(0)$ with $r=\sqrt{\lambda^2+4}-\lambda$.
\end{theorem}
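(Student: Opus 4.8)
The plan is to argue by contradiction, using the compactness theorem (Theorem~\ref{mainthm}) to reduce the problem to a neighbourhood of the round self-shrinking sphere $S^2_2(0)$, and then to run an implicit function theorem argument there. For $\lambda\geq 0$ the conclusion is already known --- by Colding--Minicozzi \cite{colding2012generic} and Huisken \cite{huisken1990asymptotic} when $\lambda=0$, and by Heilman \cite{heilman2017symmetric} when $\lambda>0$. So suppose, for contradiction, that no $\delta_D<0$ as in the statement exists. Then for every $n$ there is $\lambda_n\in(-1/n,\infty)$ and a compact convex $\lambda_n$-surface $\Sigma_n$ with $\mathrm{diam}\,\Sigma_n\leq D$ that is not the sphere $S^2_{r_n}(0)$, where $r_n:=\sqrt{\lambda_n^2+4}-\lambda_n$. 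By the known case $\lambda\geq 0$ we must have $\lambda_n<0$, so $\lambda_n\to 0$. Each $\Sigma_n$ has genus $0$ and $\partial\Sigma_n=\emptyset$; by Remark~\ref{remarklemmaLx} it meets the sphere of radius $r_n\to 2$ about the origin, so for large $n$ it lies in $\overline{B_{D+3}(0)}$, and convexity gives a uniform area bound. By Remark~\ref{remarkareabound} a uniform diameter and area bound suffice, so Theorem~\ref{mainthm} (or Theorem~\ref{convergence1}) applies to $\{\Sigma_n\}$.

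Next I would identify the limit. After passing to a subsequence, Theorem~\ref{mainthm} produces a limit $\Sigma$; since $\lambda_n\to 0$ and all $\Sigma_n$ lie in a fixed ball, $\Sigma$ is a complete --- hence compact --- self-shrinker, nontrivial by Theorem~\ref{convergence1}. Now I would use convexity: each $\Sigma_n=\partial K_n$ for a compact convex body $K_n$, and after a further subsequence $K_n\to K$ in the Hausdorff metric for some compact convex $K$. If $K$ were degenerate (dimension $\leq 2$), then $\Sigma$, being the set-theoretic limit of $\partial K_n$, would be a bounded region of an affine $2$-plane, which is not a complete self-shrinker without boundary; so $K$ is a genuine convex body and $\Sigma=\partial K$ is a smoothly embedded convex surface. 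In particular $\Sigma$ has no self-touching points, which rules out the multiplicity-$2$ alternative of Theorem~\ref{mainthm} and all neck-pinching points, so $\Sigma_n\to\Sigma$ in $C^k$ on all of $\R^3$ with multiplicity one. Finally $\Sigma$ is a compact self-shrinker with $A\geq 0$ (a $C^2$-limit of convex surfaces), hence mean convex, so the classification of compact mean-convex self-shrinkers \cite{huisken1990asymptotic},\cite{colding2012generic} gives $\Sigma=S^2_2(0)$.

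Then I would set up the implicit function theorem on $S^2_2(0)$. There the position vector is normal, so $x^\top=0$, and $\vert A\vert^2=\tfrac12$; hence the linearized operator of Section~2 is $L=\Delta_{S^2_2}+1$ on $S^2_2(0)$, whose eigenvalues are $1-k(k+1)/4$, $k\geq 0$ (the value $1/2$, at $k=1$, being attained on $\langle v,\mathbf n\rangle$ as in Lemma~\ref{qianglem}). Since $0$ is not among these, $\ker L=\{0\}$ and $L\colon C^{2,\alpha}(S^2_2)\to C^{0,\alpha}(S^2_2)$ is an isomorphism. Writing a nearby $\lambda$-surface as $S^2_2(0)+\varphi\,\mathbf n$, Theorem~\ref{difference} with $\lambda_1=0$ gives $L\varphi+\lambda=\dv(a\cdot\nabla\varphi)+b\cdot\nabla\varphi+c\varphi=:P(\varphi)$, and by Remark~\ref{remarklinearlizedequation} the map $\varphi\mapsto P(\varphi)$ vanishes to second order at $\varphi=0$. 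Thus $F(\lambda,\varphi):=L\varphi+\lambda-P(\varphi)$ is $C^1$ near $(0,0)$ with $F(0,0)=0$ and $D_\varphi F(0,0)=L$ invertible, so the implicit function theorem yields a unique small $C^{2,\alpha}$ solution $\varphi=\varphi(\lambda)$ for $\lambda$ near $0$. Since the sphere $S^2_{r(\lambda)}(0)$, $r(\lambda)=\sqrt{\lambda^2+4}-\lambda$, is a $\lambda$-surface which is a small (in fact constant) normal graph over $S^2_2(0)$ for small $\lambda$, uniqueness forces every $\lambda$-surface that is a small $C^{2,\alpha}$ normal graph over $S^2_2(0)$ to equal $S^2_{r(\lambda)}(0)$. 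By the previous step $\Sigma_n\to S^2_2(0)$ smoothly, so for $n$ large $\Sigma_n$ is such a graph and hence $\Sigma_n=S^2_{r_n}(0)$ --- contradicting the choice of $\Sigma_n$. This establishes the desired $\delta_D<0$, which together with the cases $\lambda\geq 0$ proves the theorem.

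I expect the main obstacle to be the identification of the limit: showing that convexity genuinely excludes the degenerate behaviour allowed by Theorem~\ref{mainthm} --- the multiplicity-$2$ convergence and the touching/neck-pinching points --- and that the limit is a nondegenerate compact mean-convex self-shrinker, so that the rigidity of \cite{huisken1990asymptotic},\cite{colding2012generic} can be invoked. The remaining ingredients, the spectral fact $\ker L=\{0\}$ on $S^2_2(0)$ and the implicit function theorem, are then routine given Theorem~\ref{difference} and Remark~\ref{remarklinearlizedequation}.
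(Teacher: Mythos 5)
Your proposal is correct and follows essentially the same route as the paper: contradiction via the compactness theorem, identification of the limit as the round sphere $S^2_2(0)$ using convexity to exclude touching and higher multiplicity, and then the implicit function theorem based on the invertibility of $L=\Delta_{S^2_2}+1$ together with Theorem~\ref{difference} to force $\Sigma_n=S^2_{r_n}(0)$ for large $n$. You in fact supply more detail than the paper at the limit-identification step (the Hausdorff-convergence argument for the convex bodies and the explicit spectrum of $L$), but the structure of the argument is the same.
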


The self-shrinkers case $\lambda=0$ was proved by Colding-Minicozzi in \cite{colding2012generic}. Later Heilman generalized Colding-Minicozzi's result to the case $\lambda\geq0$ in \cite{heilman2017symmetric}. Note they actually proved the rigidity for mean convex $\lambda$-surface. Their proof can not be generalized to $\lambda<0$ case because one key estimate in their proof relies on the positivity of $\lambda$.

In order to use the compactness theorem, first we need an area growth bound for convex surface. 

\begin{lemma}\label{convexarea}
Suppose $\Sigma$ is a convex surface in $\mb R^3$ without boundary. Then for any $x_0\in R^3$,
\begin{equation}
\area(B_R(x_0)\cap\Sigma)\leq 4\pi R^2.
\end{equation}
\end{lemma}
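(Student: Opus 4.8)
I would exploit that a convex surface bounds a convex region together with the monotonicity of surface area of convex bodies under inclusion. Write $\Sigma=\partial K$, where $K\subseteq\R^3$ is a closed convex set (the region bounded by $\Sigma$, possibly unbounded), and set $C:=K\cap\overline{B_R(x_0)}$, a compact convex set with $C\subseteq\overline{B_R(x_0)}$. The first step is the inclusion $\Sigma\cap B_R(x_0)\subseteq\partial C$: if $p\in\partial K$ and $|p-x_0|<R$ then $p\in C$, while every sufficiently small neighbourhood of $p$ meets $\R^3\setminus K$ inside $B_R(x_0)$ and hence meets $\R^3\setminus C$; so $p\in\partial C$. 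Therefore $\area(\Sigma\cap B_R(x_0))\le\mathcal H^2(\partial C)$, and the proof reduces to showing $\mathcal H^2(\partial C)\le 4\pi R^2$.

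For that I would use the nearest-point projection $\Pi\colon\R^3\to C$ onto the nonempty closed convex set $C$, which is $1$-Lipschitz and hence does not increase $\mathcal H^2$-measure. The key claim is that $\Pi$ maps $\partial B_R(x_0)$ onto $\partial C$. The inclusion $\Pi(\partial B_R(x_0))\subseteq\partial C$ is immediate, since $\mathrm{int}(C)\subseteq B_R(x_0)$ and a nearest point of $C$ to a point of $\partial B_R(x_0)$ cannot lie in $\mathrm{int}(C)$. For the reverse inclusion, given $q\in\partial C$ not already on $\partial B_R(x_0)$ I would pick a supporting hyperplane of $C$ at $q$ with outward unit normal $\nu$, follow the ray $t\mapsto q+t\nu$ until it first leaves $\overline{B_R(x_0)}$ at a point $q'$, and check from the supporting-hyperplane inequality that $q$ is the nearest point of $C$ to $q'$, i.e.\ $\Pi(q')=q$; points of $\partial C\cap\partial B_R(x_0)$ are fixed by $\Pi$. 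Hence
\[
\mathcal H^2(\partial C)=\mathcal H^2\bigl(\Pi(\partial B_R(x_0))\bigr)\le\mathcal H^2\bigl(\partial B_R(x_0)\bigr)=4\pi R^2,
\]
which together with the first step proves the lemma.

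The main point to write carefully is the surjectivity $\Pi(\partial B_R(x_0))=\partial C$, where one must use supporting hyperplanes of the \emph{intersection} $C=K\cap\overline{B_R(x_0)}$ rather than of $K$, and one must dispose separately of the degenerate cases $C=\emptyset$ (then $\Sigma\cap B_R(x_0)=\emptyset$) and $\mathrm{int}(C)=\emptyset$ (then $\Sigma\cap B_R(x_0)$ lies in a plane inside $B_R(x_0)$, of area at most $\pi R^2$). A clean alternative avoiding the projection is the Cauchy surface-area formula: $\mathcal H^2(\partial C)$ equals $4$ times the average over directions $u\in S^2$ of the area of the orthogonal projection of $C$ onto $u^{\perp}$, and each such projection is contained in the projection of $\overline{B_R(x_0)}$, a disc of area $\pi R^2$, again giving $\mathcal H^2(\partial C)\le 4\pi R^2$. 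I expect this convex-geometry estimate to be essentially the only content; the inclusion $\Sigma\cap B_R(x_0)\subseteq\partial C$ and the reduction are routine.
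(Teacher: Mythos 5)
Your proposal is correct, and it reaches the bound by a genuinely different route than the paper. The paper follows the Almgren--Simon replacement-lemma argument: it extends the unit normal of $\Sigma$ to a vector field $V$ on the exterior of the convex hull by setting $V(x)=\mathbf n(\pi(x))$ with $\pi$ the nearest-point projection onto $\Sigma$, observes that convexity gives $\dv V\geq 0$, and applies the divergence theorem on the region of $B_R(x_0)$ outside $\Sigma$; comparing the flux through $\Sigma$ (where $V\cdot\mathbf n=1$) with the flux through the spherical cap $S$ (where $|V\cdot\mathbf n|\leq 1$) yields $\area(\Sigma\cap B_R(x_0))\leq\area(S)\leq 4\pi R^2$. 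You instead invoke the classical monotonicity of boundary measure for nested convex bodies: the nearest-point projection onto $C=K\cap\overline{B_R(x_0)}$ is $1$-Lipschitz and maps $\partial B_R(x_0)$ onto $\partial C$, so $\mathcal H^2(\partial C)\leq\mathcal H^2(\partial B_R(x_0))$. Both arguments ultimately rest on the nearest-point projection onto a convex set, but the paper's is a calibration/first-variation computation requiring $\dv V\geq 0$ (hence some smoothness of $\Sigma$ off the surface), while yours is purely measure-theoretic, needs no differentiation of $V$, and handles the degenerate cases ($C$ empty or lower-dimensional) explicitly, which the paper glosses over; your Cauchy-formula alternative is a third clean route. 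Your surjectivity argument via supporting hyperplanes of $C$ (not of $K$) is the right care to take, and the reduction $\Sigma\cap B_R(x_0)\subseteq\partial C$ is sound. The only cosmetic remark is that for the inequality you only need $\partial C\subseteq\Pi(\partial B_R(x_0))$, not the full equality of sets.
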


Note here our area bound holds for even non-compact convex surface.

\begin{proof}
A stronger argument appeared in the proof of the replacement lemma of Almgren-Simon in \cite{almgren1979existence}. See proof of Theorem 1 in section 3 in \cite{almgren1979existence}.

Since $\Sigma$ is convex, it divides the whole $\mb R^3$ into two parts: either the point in $\mb R^3$ lies in the convex hull of $\Sigma$, or it doesn't lie in the convex hull of $\Sigma$. Moreover, by convexity of $\Sigma$, for any point $x\in\mathbb R^3$ do not lie in $\Sigma$, we can find an unique points $\pi(x)$ on $\Sigma$ such that the distance between $\pi(x)$ and $x$ is the shortest distance from $x$ to $\Sigma$, and the straight line connect $x$ and $\pi(x)$ is perpendicular to $\Sigma$.

Now we define a vector field $V(x)$ out side $\Sigma$ such that $V(x)=\mathbf{n}(x)$ on $\Sigma$, and for any $x\in\mb R^3$ out side $\Sigma$, $V(x)=V(\pi(x))$. Since $\Sigma$ is convex, $\dv V\geq0$. This vector field is well defined because the projection is unique.

Let $S$ to be the part of $\partial B_{R}(x_0)$ lies out side $\Sigma$, and denote $D$ to be the domain of $B_R(x_0)$ lies out side of $\Sigma$. Then $\partial D=S\cup\Sigma$. Integration by part gives
\[\int_{\Sigma\cup S}V\cdot\mathbf n=\int_D\dv V\geq0\]
Note on $\Sigma$, $V\cdot\mathbf n=1$, where on $S$, $V\cdot\mathbf n\geq -1$. So
\[\area(\Sigma)\leq\area(S)\leq 4\pi R^2.\]
\end{proof}

Now we can prove the rigidity of bounded convex $\lambda$-surfaces.

\begin{proof}[Proof of Theorem \ref{convexrigidity}]
When $\lambda\geq0$ this rigidity is known in \cite{colding2012generic} and \cite{heilman2017symmetric}. So we only need to show $\delta_D<0$ exists. We argue by contradiction. Suppose such $\delta_D$ does not exists, then we can find a sequence of convex $\lambda_i$-surface $\Sigma_i$, $\lambda\to0$, such that $\Sigma_i$ is not a sphere $S^2_{\sqrt{\lambda_i^2+4}-\lambda_i}$. Convexity indicates that $\Sigma_i$ are all diffeomorphic to spheres, which gives the genus bound. By Lemma \ref{convexarea}, we get $\area(B_R(x_0)\cap\Sigma_i)\leq CR^2$ for a constant $C$. Thus we can apply our compactness Theorem \ref{mainthm} to $\{\Sigma_i\}$, and a subsequence must smoothly converge to a smooth compact self-shrinker $\Sigma$ because of the diameter bound. Convexity also implies that the the convergence must has multiplicity $1$. By maximum principle, $\Sigma$ can not have touching point, hence $\Sigma$ is embedded. Since the convergence is smooth and $\Sigma_i$ are all convex, $\Sigma$ is also convex. Then by \cite{colding2012generic} we know $\Sigma$ has to be the sphere $S^2_2(0)$.

On sphere with radius $r$
\[L=\Delta_{S^2_r}+\frac{1}{2}+\frac{2}{r^2}.\]
So when $r=2$, $L=\Delta_{S^2_2}+1$. 

Let us consider a family of maps between Banach spcaes:
\begin{equation}
f:\mathbb{R}\times C^{2,\alpha}\to C^{0,\alpha}
\end{equation}
where $f(t,u)=Lu+1-P_t(x,u,\nabla u,\nabla^2 u)$, such that $f(t,u)=0$ is the $\lambda$-surface equation with $\lambda=t$. The Fr\'echet differential of $f$ with respect to the second position at $(0,0)$ is just the linearized operator $L$. Since $1$ is not an eigenvalue of $S^2_2$, by Fredholm alternative theorem, $L$ is a local homomorphism from $C^{2,\alpha}\to C^{0,\alpha}$. So by implicit function theorem, locally there is a unique function $g:I\subset\mb R\to C^{2,\alpha}$ such that $f(t,g(t))=0$. Moreover, if we treat the sphere of radius $S^2_{\sqrt{t^2+4}-t}$ is a graph $u_t$ over $S^2_2$, then $u_t$ is a solution to $f(t,u_t)=0$. Thus they are the unique solutions to $f(t,u_t)=0$.

So when $i$ large enough, $\Sigma_i$ is sphere $S^2_{\sqrt{\lambda_i^2+4}-\lambda_i}$, which is a contradiction. Thus, there must be $\delta_0<0$ such that for $\lambda\in(0,+\infty)$.
\end{proof}

\begin{remark}
It is an interesting question to know more information about $\delta_D$. In the $1$ dimensional case Chang in \cite{chang20171} constructed examples to show that $\delta_D>-\infty$ for some $D$. We conjecture $\delta_D>-\infty$ also holds in $2$-dimensional $\lambda$-surfaces in $\mb R^3$ for some $D$.
\end{remark}

\bibliography{bibfile}
\bibliographystyle{alpha}

\end{document}